\renewcommand{\@seccntformat}[1]{{\csname the#1\endcsname}.\hspace{.5em}}
\newtheorem{thm}{Theorem}[section]
\newtheorem{conj}[thm]{Conjecture}
\newtheorem{lem}[thm]{Lemma}
\newtheorem{remark}[thm]{Remark}
\renewcommand{\qed}{\hfill$\Box$\medskip}
\renewcommand{\thefootnote}{*}
\numberwithin{equation}{section}
\begin{document}
\begin{center}
{\large\bf Proof of two supercongruences by the Wilf-Zeilberger method}
\end{center}

\vskip 2mm \centerline{Guo-Shuai Mao}
\begin{center}
{\footnotesize $^1$Department of Mathematics, Nanjing
University of Information Science and Technology, Nanjing 210044,  People's Republic of China\\
{\tt maogsmath@163.com  } }
\end{center}
%%date: January 4, 2011
%\vskip 5mm
%\noindent {\it Suggested Running title}: Two Identities of Gould

%\vskip 0.7cm \noindent{\bf Abstract.}
%In this paper, we prove a supercongruence conjectured by Z.-W. Sun via the Wilf-Zeilberger method, which is
%\begin{align*}
%\sum_{n=0}^{p-1}\frac{6n+1}{256^n}\binom{2n}n^3&\equiv p(-1)^{(p-1)/2}-p^3E_{p-3}\pmod{p^4}.
%\end{align*}
%In fact, this supercongruence is a generalization of a supercongruence of van Hamme \cite{vhamme}.
\vskip 0.7cm \noindent{\bf Abstract.}
In this paper, we prove two supercongruences by the Wilf-Zeilberger method. One of them is, for any prime $p>3$,
\begin{align*}
\sum_{n=0}^{(p-1)/2}\frac{3n+1}{(-8)^n}\binom{2n}n^3\equiv p\left(\frac{-1}p\right)+\frac{p^3}4\left(\frac2p\right)E_{p-3}\left(\frac14\right)\pmod{p^4},
\end{align*}
where $\left(\frac{\cdot}p\right)$ stands for the Legendre symbol, and $E_{n}(x)$ are the Euler polynomials. This congruence confirms a conjecture of Sun \cite[(2.18)]{sun-numb-2019} with $n=1$.

\vskip 3mm \noindent {\it Keywords}: Supercongruence; Binomial coeficients; Wilf-Zeilberger method; Euler polynomials.

\vskip 0.2cm \noindent{\it AMS Subject Classifications:} 11B65, 11A07, 11B68, 33F10, 05A10.

\renewcommand{\thefootnote}{**}
%\vskip 3mm \noindent {\it Keywords}: Supercongruence; Binomial coeficients; Wilf-Zeilberger method; Euler numbers.

%\vskip 0.2cm \noindent{\it AMS Subject Classifications:} 11A07, 05A10.
%\renewcommand{\thefootnote}{**}
\section{Introduction}

    \qquad Recall that the Euler numbers $\{E_n\}$ and the Euler polynomials $\{E_n(x)\}$ are defined by
$$\frac{2e^t}{e^{2t}+1}=\sum_{n=0}^\infty E_n\frac{t^n}{n!}\ (|t|<\frac{\pi}2)\ \mbox{and}
\ \frac{2e^{xt}}{e^{t}+1}=\sum_{n=0}^\infty E_n(x)\frac{t^n}{n!}\ (|t|<\pi),$$
the Bernoulli numbers $\{B_n\}$ and the Bernoulli polynomials $\{B_n(x)\}$ are defined as follows:
$$\frac x{e^x-1}=\sum_{n=0}^\infty B_n\frac{x^n}{n!}\ \ (0<|x|<2\pi)\ \mbox{and}\ B_n(x)=\sum_{k=0}^n\binom nkB_kx^{n-k}\ \ (n\in\mathbb{N}).$$

In the past decade, many researchers studied supercongruences via the Wilf-Zeilberger (WZ) method (see, for instance, \cite{gl-arxiv-2019,CXH-rama-2016,he-jnt-2015,hm-rama-2017,mz-rama-2019,oz-jmaa-2016,sun-ijm-2012,zudilin-jnt-2009}). For instance, W. Zudilin \cite{zudilin-jnt-2009} proved several Ramanujan-type supercongruences by the WZ method. One of them, conjectured by van Hamme \cite{vhamme}, says that for any odd prime $p$,
\begin{align}\label{wzpr}
\sum_{k=0}^{(p-1)/2}(4k+1)(-1)^k\left(\frac{\left(\frac12\right)_k}{k!}\right)^3\equiv(-1)^{(p-1)/2}p\pmod{p^3},
\end{align}
where $(a)_n=a(a+1)\ldots(a+n-1) (n\in\{1,2,\ldots\})$ with $(a)_0=1$ is the raising factorial for $a\in\mathbb{C}$.

For $n\in\mathbb{N}$, define
$$H_n:=\sum_{0<k\leq n}\frac1k,\ H_n^{(2)}:=\sum_{0<k\leq n}\frac1{k^2},\ H_0=H_0^{(2)}=0,$$
where $H_n$ with $n\in\mathbb{N}$ are often called the classical harmonic numbers. Let $p>3$ be a prime. J. Wolstenholme \cite{wolstenholme-qjpam-1862} proved that
\begin{align}\label{hp-1}
H_{p-1}\equiv0\pmod{p^2}\ \mbox{and}\ H_{p-1}^{(2)}\equiv0\pmod p,
\end{align}
which imply that
\begin{align}
\binom{2p-1}{p-1}\equiv1\pmod{p^3}.\label{2p1p}
\end{align}

Z.-W. Sun \cite{sun-ijm-2012} proved the following supercongruence by the WZ method, for any odd prime $p$,
\begin{equation}\label{sun}
\sum_{k=0}^{p-1}\frac{4k+1}{(-64)^k}\binom{2k}k^3\equiv(-1)^{\frac{(p-1)}2}p+p^3E_{p-3}\pmod{p^4}.
\end{equation}

Guo and Liu \cite{gl-arxiv-2019} showed that for any prime $p>3$,
\begin{equation}\label{glp4}
\sum_{k=0}^{(p+1)/2}(-1)^k(4k-1)\frac{\left(-\frac12\right)_k^3}{(1)_k^3}\equiv p(-1)^{(p+1)/2}+p^3(2-E_{p-3})\pmod{p^4}.
\end{equation}
Guo also researched $q$-analogues of Ramanujan-type supercongruences and $q$-analogues of supercongruences of van Hamme (see, for instance, \cite{g-jmaa-2018,guo-rama-2019}).

Long \cite{long-2011-pjm} and Chen, Xie and He \cite{CXH-rama-2016} proved that, for any odd prime $p$,
$$
\sum_{n=0}^{(p-1)/2}\frac{6n+1}{(-512)^n}\binom{2n}n^3\equiv p\left(\frac{-2}p\right)\pmod{p^2}.
$$

Recently, the author \cite{mao-arxiv-2019} proved a conjecture of Z.-W. Sun which says that: Let $p>3$ be a prime. Then
\begin{equation*}
\sum_{n=0}^{(p-1)/2}\frac{6n+1}{(-512)^n}\binom{2n}n^3\equiv p\left(\frac{-2}p\right)+\frac{p^3}4\left(\frac2p\right)E_{p-3}\pmod{p^4}.
\end{equation*}
Chen, Xie and He \cite{CXH-rama-2016} confirmed a supercongruence conjetured by Z.-W. Sun \cite{sun-scm-2011}, which says that for any prime $p>3$,
$$
\sum_{k=0}^{p-1}\frac{3k+1}{(-8)^k}\binom{2k}k^3\equiv p(-1)^{(p-1)/2}+p^3E_{p-3}\pmod{p^4}.
$$
In this paper we prove the following result:
\begin{thm}\label{Thsun} Let $p>3$ be a prime. Then
\begin{equation}
\sum_{n=0}^{(p-1)/2}\frac{3n+1}{(-8)^n}\binom{2n}n^3\equiv p\left(\frac{-1}p\right)+\frac{p^3}{4}\left(\frac{2}p\right)E_{p-3}\left(\frac14\right)\pmod{p^4}.
\end{equation}
\end{thm}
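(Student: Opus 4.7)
My plan is to follow the pattern established by Sun \cite{sun-ijm-2012} and Zudilin \cite{zudilin-jnt-2009} for similar Ramanujan-type supercongruences, and build a Wilf-Zeilberger pair tailored to the summand $(3n+1)(-8)^{-n}\binom{2n}{n}^3$. Specifically, I would seek rational functions $F(n,k)$ and $G(n,k)$ satisfying
$$F(n+1,k)-F(n,k)=G(n,k+1)-G(n,k),$$
where $F(n,0)$ recovers (up to a simple factor) the summand and $G$ is the Gosper certificate. A natural ansatz, modeled on the author's earlier proof \cite{mao-arxiv-2019} of the $(-512)^n$-analogue, takes $F(n,k)$ of the form $(-8)^{-n}\binom{2n}{n}\binom{2n-k}{n}\binom{n}{k}$ multiplied by a low-degree polynomial in $n,k$, and one would then feed this into Zeilberger's algorithm to produce the pair $(F,G)$ explicitly.

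Once the WZ pair is in hand, I would sum the WZ identity over $0\le n\le (p-1)/2$, so that the $G$-terms telescope in $k$ and the $F$-column collapses onto a few boundary values. The left-hand side recovers the target sum, and the right-hand side becomes a linear combination of finitely many evaluations of $F$ and $G$ at the edges of this triangle. Reducing those boundary values modulo $p^4$ via the classical Wolstenholme congruences (\ref{hp-1}) and (\ref{2p1p}) should isolate the main term $p\left(\frac{-1}p\right)$ together with an error of order $p^3$.

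The final task is to identify this $p^3$-remainder with $\tfrac{1}{4}\left(\frac{2}p\right)E_{p-3}(1/4)$. This will rely on a Kummer-type congruence expressing $E_{p-3}(1/4)$ modulo $p$ as an alternating partial sum $\sum_{k=0}^{(p-3)/2}(-1)^k(2k+1)^{-(p-3)}$, combined with a careful expansion of $\binom{p-1}{(p-1)/2}$ and $(-8)^{-(p-1)/2}$ to modulus $p^4$ coming from the boundary term at $n=(p-1)/2$; the Legendre symbol $\left(\frac{2}p\right)$ should emerge from the Fermat-quotient factor $2^{p-1}\equiv 1+p\,q_p(2)+\cdots\pmod{p^2}$.

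The main obstacle is precisely this third step: because the summation is truncated at $(p-1)/2$ rather than running through the entire range $0\le n\le p-1$ (as in the Chen-Xie-He version quoted just before Theorem \ref{Thsun}), one cannot directly invoke the $n\leftrightarrow p-1-n$ symmetry used there. This forces a delicate $p$-adic bookkeeping to extract the $E_{p-3}(1/4)$-piece from the same WZ machinery that produced $E_{p-3}$ in the full-range problem, and to track the Legendre-symbol signs exactly to modulus $p^4$; reconciling the two Euler-polynomial normalizations ($E_{p-3}$ versus $E_{p-3}(1/4)$) by means of suitable alternating-harmonic-sum congruences is the crux of the calculation.
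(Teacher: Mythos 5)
Your overall strategy---build a WZ pair whose first column is the summand, telescope over the half-range, and then extract the $E_{p-3}(1/4)$ term from the residue---is indeed the strategy of the paper, which uses the Chen--Xie--He pair $F(n,k)=(-1)^{n}(3n-2k+1)\binom{2n}n\binom{2n-2k}{n-k}\binom{2n-2k}{n}2^{2k-3n}$ with its certificate $G(n,k)$. But there is a genuine gap in your second step: summing the WZ relation over $0\le n\le(p-1)/2$ does \emph{not} collapse the answer onto ``a few boundary values'' controllable by Wolstenholme. It yields $\sum_{n=0}^{(p-1)/2}F(n,0)=\sum_{k=1}^{(p-1)/2}G\bigl(\frac{p+1}2,k\bigr)$, i.e.\ an entire new column sum of length $(p-1)/2$, and evaluating that sum modulo $p^4$ is where all the work lies. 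Concretely, after writing $G\bigl(\frac{p+1}2,k\bigr)$ in terms of $\binom{p-1}{(p-1)/2}^3$ and $\binom{(p-1)/2}{2k-2}\binom{2k-2}{k-1}4^{-k}$ and expanding the Pochhammer factor $\bigl(\frac p2+1-k\bigr)_{k-1}^{2}$ in powers of $p$, one is left with four central-binomial sums weighted by $1$, $H_k$, $H_k^2+H_k^{(2)}$ and $H_k^{(2)}$ (Lemmas \ref{p-12k}--\ref{p-12khkhk2} and \ref{p-12khk2}). Each requires its own closed-form combinatorial identity (e.g.\ $\sum_{k}\binom{2n}k\binom{2n-k}k4^{-k}=\binom{4n}{2n}4^{-n}$ and its harmonic-weighted analogues), together with Morley's congruence and Z.-H. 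Sun's evaluations of $H_{(p-1)/2}$ and $H_{(p-1)/2}^{(2)}$ in terms of $q_p(2)$; your proposal supplies none of these ingredients.

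Your third step is also underspecified at the decisive point. The Euler-polynomial term does not arise from a Kummer-type congruence applied to a boundary term at $n=(p-1)/2$; it arises from the single weighted sum $\sum_{k}\binom{4k}{2k}\binom{2k}kH_k^{(2)}64^{-k}\equiv-E_{p-3}\left(\frac14\right)\pmod p$ (Lemma \ref{p-12khk2}), whose proof passes through the identity $\binom{4k}{2k}\binom{2k}k64^{-k}=\binom{-1/4}k\binom{-3/4}k$, finite-difference identities for $\sum_k\binom nk\binom{-3/4}kH_k^{(2)}$, the alternating sum $\sum_{k\le\lfloor(p-1)/4\rfloor}(-1)^k/k^2$, and Bernoulli polynomials evaluated at arguments with denominator $8$. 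Without these components the plan cannot be completed as stated, even though the WZ scaffolding you describe is the correct one.
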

\begin{remark}\rm This congruence confirms a conjecture of Sun \cite[(2.18)]{sun-numb-2019} with $n=1$. And this congruence with \cite[Theorem 1.5]{mao-arxiv-2019} yields that
$$
\sum_{n=0}^{(p-1)/2}\frac{3n+1}{(-8)^n}\binom{2n}n^3\equiv4\left(\frac{2}p\right)\sum_{n=0}^{p-1}\frac{6n+1}{(-512)^n}\binom{2n}n^3-3p\left(\frac{-1}p\right)\pmod{p^4},
$$
which is a conjecture of Sun \cite[Conjecture 5.1]{sun-scm-2011}.
\end{remark}
Guo \cite{g-jmaa-2018} proved that
$$
\sum_{k=0}^{(p^r-1)/2}\frac{4k+1}{(-64)^k}\binom{2k}k^3\equiv(-1)^{\frac{(p-1)r}2}p^r\pmod{p^{r+2}},
$$
and in the same paper he proposed a conjecture as follows:
\begin{conj}\label{Guo-2018}{\rm (\cite[Conjecture 5.1]{g-jmaa-2018})}
$$
\sum_{k=0}^{p^r-1}\frac{4k+1}{(-64)^k}\binom{2k}k^3\equiv(-1)^{\frac{(p-1)r}2}p^r\pmod{p^{r+2}}.
$$
\end{conj}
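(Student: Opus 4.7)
I plan to prove Conjecture~\ref{Guo-2018} by combining Guo's proven half-range identity
\begin{equation*}
\sum_{k=0}^{(p^r-1)/2}\frac{4k+1}{(-64)^k}\binom{2k}{k}^3\equiv(-1)^{(p-1)r/2}\,p^r\pmod{p^{r+2}}
\end{equation*}
with a vanishing statement for the upper-half tail
\begin{equation*}
T_r:=\sum_{k=(p^r+1)/2}^{p^r-1}\frac{4k+1}{(-64)^k}\binom{2k}{k}^3\equiv 0\pmod{p^{r+2}}.
\end{equation*}

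For the base case $r=1$, each summand of $T_1$ has $p$-adic valuation at least $3$, since Kummer's theorem produces exactly one base-$p$ carry in $k+k$ for $(p-1)/2<k\le p-1$; hence $T_1\equiv 0\pmod{p^3}$ and the $r=1$ case follows immediately. For $r\ge 2$, the term-by-term bound fails: a direct check ($p=5,\ r=2,\ k=15$) already gives $v_5$ equal to $3<r+2$, so genuine cancellation in the upper tail is required, and such cancellation appears to come from pairing $k$ with $(3p^r-1)/2-k$ (e.g.\ for $p=3,\,r=2$ one verifies $a_6+a_7\equiv 0\pmod{81}$ even though neither term is).

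My strategy for general $r$ is a reflection-plus-WZ argument. I would perform the substitution $k=p^r-1-j$ in $T_r$, and then use Morley-type congruences for $\binom{2(p^r-1-j)}{p^r-1-j}$ derived from the functional equation of the $p$-adic Gamma function to express each $a_{p^r-1-j}$ modulo $p^{r+2}$ as a rescaled copy of $a_j$. This rewrites $T_r$ modulo $p^{r+2}$ as an explicit combination of lower-half terms, so that Guo's identity evaluates that combination and one checks the result is zero. To obtain the Morley ratios with the precision $p^{r+2}$, I would adapt Zudilin's Wilf--Zeilberger certificate underlying (1.1): iterate the relation $F(n+1,k)-F(n,k)=G(n,k+1)-G(n,k)$ across $n=0,1,\ldots,(p^r-1)/2$ and absorb the boundary residues using the Wolstenholme congruences (1.3) together with classical harmonic-number identities.

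The central obstacle is precision control through the $r$-fold iteration. Zudilin's certificate delivers only $p^3$-precision, whereas Conjecture~\ref{Guo-2018} demands $p^{r+2}$, and a straightforward iteration leaves residual boundary terms that must be tracked through a closed-form evaluation of the WZ defect $G(n,n+1)$. A possible bypass, in the spirit of Long and of Chen--Xie--He, is to recast the identity directly in terms of $p$-adic Gamma values at rational arguments and to invoke known reflection and multiplication formulas. A secondary technical subtlety is the prime $p=3$, where extra base-$3$ carries complicate both the Kummer analysis of $T_r$ and the Morley step, and which will likely require a separate verification.
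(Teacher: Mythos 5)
You should first note that this paper contains no proof of Conjecture \ref{Guo-2018} at all: it records the conjecture and cites Guo--Zudilin \cite{GZ-aim-2019} and the author's preprint \cite{mao-2019-arxiv} for proofs, while the in-paper analogue proved by the same machinery is Theorem \ref{Th10ngz} in Section 3. Measured against that method, your proposal has a genuine gap exactly where you yourself locate it. The reduction to the tail $T_r$ via Guo's half-range congruence is legitimate, and your $r=1$ case is correct: for $(p-1)/2<k\le p-1$ there is exactly one base-$p$ carry in $k+k$, so $p^3\mid\binom{2k}{k}^3$ and $T_1\equiv0\pmod{p^3}$. But for $r\ge2$ --- which is the entire content of the conjecture beyond what Guo already proved --- you have a research plan, not a proof, and the plan's final step fails concretely: after the reflection $k=p^r-1-j$, the weight becomes $4(p^r-1-j)+1\equiv-(4j+3)\pmod{p^r}$, and the Morley/Pan--Sun relation (in the form $l\binom{2l}{l}\binom{2k}{k}\equiv-2p^r\pmod{p^{r+1}}$ for $k+l=p^r$, used in Lemma \ref{Gprk} here) replaces each central binomial coefficient by a $j$-dependent rational factor involving $p^r\big/\bigl(l\binom{2l}{l}\bigr)$. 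The reflected tail is therefore a sum with weights $4j+3$ and extraneous factors, not of the $(4j+1)$-weighted hypergeometric shape that Guo's identity evaluates, so ``Guo's identity evaluates that combination'' does not go through. Worse, these relations are known only modulo $p^{r+1}$, one power short of your target $p^{r+2}$, and ``iterating Zudilin's WZ certificate'' is not a mechanism that manufactures higher $p$-adic precision: the certificate is a fixed rational-function identity in $(n,k)$, and summing it over longer ranges changes the boundary terms, not the modulus to which local congruences like Morley's hold.

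The route that actually works --- in \cite{mao-2019-arxiv} for this conjecture, and in Section 3 of this paper for Theorem \ref{Th10ngz} --- dispenses with tail-splitting entirely. One sums the WZ relation over the \emph{full} range $0\le n\le p^r-1$, obtaining $\sum_{n=0}^{p^r-1}F(n,0)=\sum_{k=1}^{p^r-1}G(p^r,k)$; the summand $G(p^r,k)$ vanishes identically for $k>(p^r+1)/2$, the single boundary value $G\left(p^r,\frac{p^r+1}2\right)$ yields the main term $(-1)^{(p^r-1)/2}p^r$ using only the $p^3$-level Morley congruence (Lemma \ref{smor}) because the prefactor $p^r$ is already present (Lemma \ref{Gprpr+12}), and the remaining sum $\sum_{k=1}^{(p^r-1)/2}G(p^r,k)$ vanishes modulo $p^{r+2}$ in aggregate by the Pan--Sun congruences (Lemma \ref{Gprk}). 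In other words, the cross-range cancellation you were trying to engineer by pairing $k$ with $(3p^r-1)/2-k$ is built into the telescoping once the main term is isolated as a $G$-boundary value; no Morley ratio of precision $p^{r+2}$ is ever needed. Finally, your worry about $p=3$ is moot, since the statement is asserted only for primes $p>3$.
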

Guo and zudilin proved this Conjecture by founding its $q$-anology, (see \cite{GZ-aim-2019}). And the author \cite{mao-2019-arxiv} gave a new proof of it by the WZ method.
\begin{thm}\label{Th10ngz} For any prime $p>3$ and integer $r>0$, we have
\begin{align}
\sum_{n=0}^{p^r-1}\frac{3n+1}{(-8)^n}\binom{2n}n^3&\equiv(-1)^{(p^r-1)/2}p^r\pmod{p^{r+2}}.
\end{align}
\end{thm}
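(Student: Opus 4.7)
I will prove Theorem~\ref{Th10ngz} by induction on $r$, adapting the Wilf--Zeilberger strategy that was used in \cite{mao-2019-arxiv} to settle the companion Conjecture~\ref{Guo-2018}. The base case $r=1$ is immediate from the Chen--Xie--He congruence
$$\sum_{k=0}^{p-1}\frac{3k+1}{(-8)^k}\binom{2k}{k}^3 \equiv p(-1)^{(p-1)/2}+p^3E_{p-3}\pmod{p^4}$$
recalled in the introduction, which a fortiori gives the required $\pmod{p^3}$ statement.

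For the inductive step, assume the congruence holds for $r-1$. The key tool is a Wilf--Zeilberger pair $(F(n,k),G(n,k))$, presumably the same pair that drives the proof of Theorem~\ref{Thsun}, with $F(n,0)=\frac{3n+1}{(-8)^n}\binom{2n}{n}^3$ and $F(n,k+1)-F(n,k)=G(n+1,k)-G(n,k)$. Summing this recurrence over $0\le n\le p^r-1$ yields
$$S(p^r,k+1)-S(p^r,k)=G(p^r,k)-G(0,k),\qquad S(N,k):=\sum_{n=0}^{N-1}F(n,k),$$
and iterating in $k$ up to a suitable upper bound $K$ expresses $S(p^r,0)$ as $S(p^r,K)-\sum_{k=0}^{K-1}[G(p^r,k)-G(0,k)]$. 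Two subcomputations then remain: (i) evaluating $\sum_{k} G(0,k)$ in closed form to recover the predicted right-hand side $(-1)^{(p^r-1)/2}p^r$; and (ii) showing that every boundary term $G(p^r,k)$ is $\equiv 0\pmod{p^{r+2}}$.

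The main obstacle is step~(ii). By Kummer's theorem one has $v_p\!\left(\binom{2p^r}{p^r}\right)=r$, so $F(p^r,k)$ automatically carries a factor $p^{3r}$, which far exceeds $p^{r+2}$; however, the WZ certificate $R(n,k):=G(n,k)/F(n,k)$ is a rational function whose denominator can eat into this reserve. The delicate point is to verify, via an explicit factorization of $R(n,k)$ together with the Wolstenholme-type congruences (\ref{hp-1})--(\ref{2p1p}) and their higher-power analogues, that $v_p(R(p^r,k))\ge -(2r-2)$ uniformly in the relevant range of $k$. Once this $p$-adic bound is in hand, each $G(p^r,k)$ vanishes modulo $p^{r+2}$ and the inductive step closes; a short sign check based on $(-1)^{(p^r-1)/2}=(-1)^{(p-1)(1+p+\cdots+p^{r-1})/2}$ reconciles the factor $(-1)^{(p^r-1)/2}$ in the conclusion with $(-1)^{(p^{r-1}-1)/2}$ from the inductive hypothesis.
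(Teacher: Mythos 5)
Your telescoping setup matches the paper's: summing the WZ recurrence over $0\le n\le p^r-1$ and then over $k$ gives $\sum_{n=0}^{p^r-1}F(n,0)=\sum_{k=1}^{p^r-1}G(p^r,k)$, so everything reduces to the boundary terms $G(p^r,k)$. But your division of labour between steps (i) and (ii) is backwards, and this is a genuine gap rather than a presentational one. First, $G(n,k)$ carries an explicit factor $n$ in its numerator, so $G(0,k)=0$ identically; your step (i) therefore evaluates to zero and cannot produce the main term $(-1)^{(p^r-1)/2}p^r$. Second, your step (ii) --- that \emph{every} $G(p^r,k)$ vanishes modulo $p^{r+2}$ --- is false: if it were true the whole sum would be $\equiv 0$, contradicting the theorem. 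What actually happens (Lemmas \ref{Gprpr+12} and \ref{Gprk}) is that the single term $k=(p^r+1)/2$ equals $p^r\binom{2p^r}{p^r}\binom{p^r-1}{(p^r-1)/2}\big/2^{2p^r-1}\equiv(-1)^{(p^r-1)/2}p^r\pmod{p^{r+2}}$, by the Morley-type congruence of Lemma \ref{smor} together with $\binom{2p^r}{p^r}\equiv2\pmod{p^2}$; the terms with $1\le k\le(p^r-1)/2$ sum to $0$ modulo $p^{r+2}$ via the Pan--Sun congruences; and the terms with $k>(p^r+1)/2$ vanish outright because $\binom{2p^r-2k}{p^r-1}=0$ there. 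So the main term lives exactly where you claim there is nothing.

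Two further problems. Your $p$-adic bookkeeping starts from $v_p\bigl(\binom{2p^r}{p^r}\bigr)=r$, which is incorrect: adding $p^r+p^r$ in base $p$ produces no carries for $p>2$, so Kummer gives $v_p\bigl(\binom{2p^r}{p^r}\bigr)=0$ (consistent with $\binom{2p^r}{p^r}\equiv2\pmod{p^2}$). The factor $p^r$ in $G(p^r,k)$ comes solely from the explicit factor $n$ in its definition, and the remaining two powers of $p$ needed for Lemma \ref{Gprk} must be extracted term by term from $\binom{2p^r-2k}{p^r-k}\equiv0\pmod p$ and the congruence $l\binom{2l}{l}\binom{2k}{k}\equiv-2p^r\pmod{p^{r+1}}$ for $k+l=p^r$; there is no uniform $p^{3r}$ reserve to draw on. Finally, the induction on $r$ is idle scaffolding: your inductive step never invokes the hypothesis for $r-1$, and no mechanism is proposed relating $\sum_{n<p^r}$ to $\sum_{n<p^{r-1}}$; the paper's argument is direct and uniform in $r$, and yours would have to be as well once the boundary terms are analysed correctly.
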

Our main tool in this paper is the WZ method. We shall prove Theorem \ref{Thsun} in the next Section. And Theorem \ref{Th10ngz} will be proved in Section 3.
\section{Proof of Theorem \ref{Thsun}}
\qquad We will use the following WZ pair which appears in \cite{CXH-rama-2016} to prove Theorem \ref{Thsun}. For nonnegative integers $n, k$, define
$$
F(n,k)=\frac{(-1)^{n}(3n-2k+1)\binom{2n}n\binom{2n-2k}{n-k}\binom{2n-2k}{n}}{2^{3n-2k}}
$$
and
$$
G(n,k)=\frac{(-1)^{n+1}n\binom{2n}n\binom{2n-2k}{n-k}\binom{2n-2k}{n-1}}{2^{3n-2k}}.
$$
Clearly $F(n,k)=G(n,k)=0$ if $n<k$ and $F(n,n)=0$ for any positive integer $n$. It is easy to check that
\begin{equation}\label{FG}
F(n,k-1)-F(n,k)=G(n+1,k)-G(n,k)
\end{equation}
for all nonnegative integer $n$ and $k>0$.

Summing (\ref{FG}) over $n$ from $0$ to $(p-1)/2$ we have
$$
\sum_{n=0}^{(p-1)/2}F(n,k-1)-\sum_{n=0}^{(p-1)/2}F(n,k)=G\left(\frac{p+1}2,k\right)-G(0,k)=G\left(\frac{p+1}2,k\right).
$$
Furthermore, summing both side of the above identity over $k$ from $1$ to $(p-1)/2$, we obtain
\begin{align}\label{wz1}
\sum_{n=0}^{(p-1)/2}F(n,0)=\sum_{k=1}^{(p-1)/2}G\left(\frac{p+1}2,k\right).
\end{align}
\begin{lem}\label{mor}{\rm (\cite{Mor})} For any prime $p>3$, we have
$$
\binom{p-1}{(p-1)/2}\equiv(-1)^{(p-1)/2}4^{p-1}\pmod{p^3}.
$$
\end{lem}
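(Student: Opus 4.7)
The plan is to expand $\binom{p-1}{(p-1)/2}$ as an explicit product and reduce modulo $p^3$. Writing $m=(p-1)/2$ and factoring each numerator as $p-k$,
\[
\binom{p-1}{m}=\prod_{k=1}^{m}\frac{p-k}{k}=(-1)^{m}\prod_{k=1}^{m}\!\left(1-\frac{p}{k}\right).
\]
Multiplying out and retaining only terms of $p$-adic order at most $2$ yields the harmonic-sum expansion
\[
\binom{p-1}{m}\equiv(-1)^{m}\!\left(1-pH_{m}+\frac{p^{2}}{2}\bigl(H_{m}^{2}-H_{m}^{(2)}\bigr)\right)\!\pmod{p^{3}}.
\]

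Next I would supply two arithmetic inputs. First, $H_{m}^{(2)}\equiv 0\pmod{p}$: pairing $k$ with $p-k$ in $H_{p-1}^{(2)}$ and expanding $1/(p-k)^{2}$ as a geometric series gives $H_{p-1}^{(2)}\equiv 2H_{m}^{(2)}\pmod{p}$, which vanishes by the classical Wolstenholme congruence $H_{p-1}^{(2)}\equiv 0\pmod p$ (valid since $p>3$). Consequently the $H_{m}^{(2)}$ term contributes nothing modulo $p^{3}$. Second, I would establish the Eisenstein--Lehmer refinement
\[
H_{m}\equiv-2\,q_{p}(2)+p\,q_{p}(2)^{2}\pmod{p^{2}},\qquad q_{p}(2):=\frac{2^{p-1}-1}{p},
\]
by expanding $2^{p}-2=\sum_{k=1}^{p-1}\binom{p}{k}$, applying $\binom{p}{k}/p=\binom{p-1}{k-1}/k$ together with the mod-$p^{2}$ refinement $\binom{p-1}{k-1}\equiv(-1)^{k-1}(1-pH_{k-1})\pmod{p^{2}}$, and then using the splitting $\sum_{k=1}^{p-1}(-1)^{k-1}/k=H_{p-1}-H_{m}$ combined with $H_{p-1}\equiv 0\pmod{p^{2}}$ to isolate $H_{m}$.

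Substituting these into the expansion, the $H_{m}^{(2)}$ term drops out, $pH_{m}\equiv -2pq_{p}(2)+p^{2}q_{p}(2)^{2}\pmod{p^{3}}$, and $\frac{p^{2}}{2}H_{m}^{2}\equiv 2p^{2}q_{p}(2)^{2}\pmod{p^{3}}$; assembling the pieces gives
\[
\binom{p-1}{m}\equiv(-1)^{m}\bigl(1+2pq_{p}(2)+p^{2}q_{p}(2)^{2}\bigr)=(-1)^{m}\bigl(1+pq_{p}(2)\bigr)^{2}\pmod{p^{3}}.
\]
Since $1+pq_{p}(2)=2^{p-1}$ by definition of $q_{p}(2)$, the right-hand side equals $(-1)^{(p-1)/2}4^{p-1}$, which is the claimed congruence.

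The principal obstacle is the Lehmer-style refinement of $H_{(p-1)/2}$ modulo $p^{2}$: the mod-$p$ version is a direct consequence of Eisenstein's formula, but the $pq_{p}(2)^{2}$ correction requires careful tracking of the second-order terms in both the expansion of $\binom{p-1}{k-1}$ and the alternating harmonic sum. Once that congruence is in hand, the remainder of the argument is mechanical symbolic manipulation.
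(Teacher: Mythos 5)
The paper does not actually prove this lemma: it is Morley's classical congruence and is simply quoted with the citation \cite{Mor}, so there is no in-paper argument to match yours against. Your route --- expand $\binom{p-1}{m}=(-1)^m\prod_{k=1}^m(1-p/k)\equiv(-1)^m\bigl(1-pH_m+\tfrac{p^2}{2}(H_m^2-H_m^{(2)})\bigr)\pmod{p^3}$, kill the $H_m^{(2)}$ term via $H_{(p-1)/2}^{(2)}\equiv0\pmod p$, and feed in Lehmer's congruence $H_{(p-1)/2}\equiv-2q_p(2)+pq_p(2)^2\pmod{p^2}$ --- is the standard modern derivation, and your final assembly is arithmetically correct: the three contributions do combine to $(-1)^m(1+pq_p(2))^2=(-1)^{(p-1)/2}4^{p-1}$.

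The one genuine gap is the step you yourself flag as the principal obstacle. Your sketch of the mod-$p^2$ Lehmer congruence does not close: writing $2q_p(2)=\sum_{k=1}^{p-1}\binom{p-1}{k-1}/k$ and inserting $\binom{p-1}{k-1}\equiv(-1)^{k-1}(1-pH_{k-1})\pmod{p^2}$ leaves you with $H_{(p-1)/2}\equiv-2q_p(2)-p\sum_{k=1}^{p-1}(-1)^{k-1}H_{k-1}/k\pmod{p^2}$, so you still must prove $\sum_{k=1}^{p-1}(-1)^{k-1}H_{k-1}/k\equiv-q_p(2)^2\pmod p$. That congruence is true but is not a routine manipulation --- it carries essentially all of the content of Morley's congruence beyond the mod-$p^2$ level, and nothing in your proposal supplies it (the naive symmetry $k\mapsto p-k$ only returns a tautology). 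To make the proof self-contained you need an independent argument for this sum, or you should simply cite E.~Lehmer's 1938 paper or \cite[Theorem 3.2]{sun-jnt-2008}, which this paper already invokes for exactly this congruence in the proof of Lemma \ref{p-12khk}; with that citation in place your argument is complete.
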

By the definition of $G(n,k)$ we have
\begin{align*}
G\left(\frac{p+1}2,k\right)&=(-1)^{\frac{p-1}2}\frac{\frac{p+1}2\binom{p+1}{\frac{p+1}2}\binom{p+1-2k}{\frac{p+1}2-k}\binom{p+1-2k}{\frac{p-1}2}}{2^{\frac{3p+3}2-2k}}\notag\\
&=\frac{2p(-1)^{\frac{p-1}2}\binom{p-1}{\frac{p-1}2}}{2^{\frac{3p+3}2}}\binom{p+1-2k}{\frac{p+1}2-k}\binom{p+1-2k}{\frac{p-1}2}4^k.
\end{align*}
It is easy to see that
$$
\frac{\binom{2n-2k}{n-k}}{4^{n-k}}=\frac{\left(\frac12\right)_{n-k}}{(n-k)!}, \ \ \left(\frac12\right)_{n-k}\left(\frac12+n-k\right)_{k-1}=\left(\frac12\right)_{n-1}
$$
and
$$
\binom{2n-2k}{n-1}=\frac{(2n-2k)!}{(n-1)!(n+1-2k)!}=\frac{\binom{2n-2k}{n-k}(n-k)!^2}{(n-1)!(n+1-2k)!}.
$$
So we have
\begin{align}\label{Gpk}
G\left(\frac{p+1}2,k\right)&=\frac{(-1)^{\frac{p-1}2}2p\binom{p-1}{\frac{p-1}2}4^{p+1}}{\left(\frac{p-1}2\right)!2^{\frac{3p+3}2}}\frac{\left(\frac12\right)_{(p+1)/2-k}^2}{\left(\frac{p+3}2-2k\right)!4^k}\notag\\
&=\frac{32p(-1)^{\frac{p-1}2}\binom{p-1}{\frac{p-1}2}^3}{2^{\frac{3p+3}2}}\frac{\left(\frac{p-1}2\right)!}{\left(\frac{p+3}2-2k\right)!\left(\frac p2+1-k\right)_{k-1}^24^k}.
\end{align}
\begin{lem}\label{p-12k} Let $p>3$ be a prime. Then
$$
2^{\frac{9p-9}2}\sum_{k=0}^{\frac{p-3}2}\frac{\binom{\frac{p-1}2}{2k}\binom{2k}k}{4^k}\equiv (-1)^{(p-1)/2}\left(1+6pq_p(2)+15p^2q_p(2)^2\right)\pmod{p^3}.
$$
\end{lem}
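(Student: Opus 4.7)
My plan is to evaluate the inner sum in closed form and then combine that result with Morley's theorem (Lemma 2.2) and the Fermat-quotient expansion of $2^{p-1}$.

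The main ingredient is the polynomial identity
$$\sum_{k\ge 0}\binom{n}{2k}\binom{2k}{k}\frac{1}{4^k}=\frac{\binom{2n}{n}}{2^n}\qquad(n\ge 0),$$
which I would prove by raising both sides of the factorization $1+y+y^{2}/4=(1+y/2)^{2}$ to the $n$th power and comparing coefficients of $y^{n}$. The right side yields $\binom{2n}{n}/2^{n}$ at a glance, while the multinomial expansion on the left contributes to $y^{n}$ only through terms with $k$ copies of $y^{2}/4$ and $n-2k$ copies of $y$, with weight $\frac{n!}{k!(n-2k)!k!\,4^{k}}=\binom{n}{2k}\binom{2k}{k}/4^{k}$.

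Setting $n=(p-1)/2$ (the upper limit $(p-3)/2$ in the lemma is harmless since $\binom{(p-1)/2}{2k}=0$ once $2k>(p-1)/2$), the left-hand side of the lemma becomes
$$2^{(9p-9)/2}\cdot\frac{\binom{p-1}{(p-1)/2}}{2^{(p-1)/2}}=2^{4(p-1)}\binom{p-1}{(p-1)/2}=16^{p-1}\binom{p-1}{(p-1)/2}.$$
Applying Lemma 2.2 to substitute $\binom{p-1}{(p-1)/2}\equiv(-1)^{(p-1)/2}\cdot 4^{p-1}\pmod{p^{3}}$ turns this into $(-1)^{(p-1)/2}(2^{p-1})^{6}$; writing $2^{p-1}=1+p\,q_{p}(2)$ with $q_{p}(2)=(2^{p-1}-1)/p$ the Fermat quotient, and expanding $(1+pq_{p}(2))^{6}$ by the binomial theorem modulo $p^{3}$, produces $1+6pq_{p}(2)+15p^{2}q_{p}(2)^{2}$, which is precisely the right-hand side of the lemma.

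The only real obstacle is spotting the closed-form identity at the outset; once $1+y+y^{2}/4$ is recognized as a perfect square, the remainder is a short bookkeeping combining Morley's congruence with a binomial expansion of $(2^{p-1})^{6}$ modulo $p^{3}$.
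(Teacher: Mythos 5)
Your proof is correct and follows essentially the same route as the paper: reduce the sum to a closed form involving a central binomial coefficient, apply Morley's congruence, and expand $(2^{p-1})^6=(1+pq_p(2))^6$ modulo $p^3$. The only difference is organizational: the paper rewrites $\binom{(p-1)/2}{2k}\binom{2k}{k}$ as $\binom{(p-1)/2}{k}\binom{(p-1)/2-k}{k}$ and invokes two separate closed-form identities according to whether $p\equiv1$ or $3\pmod4$, whereas your single identity $\sum_{k\ge0}\binom{n}{2k}\binom{2k}{k}4^{-k}=\binom{2n}{n}2^{-n}$, proved via $(1+y/2)^{2n}$, handles both parities uniformly and avoids the case split.
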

\begin{proof}
First we have the following two identities:
$$
\sum_{k=0}^n\frac{\binom{2n}k\binom{2n-k}k}{4^k}=\frac{\binom{4n}{2n}}{4^n}\ \ \mbox{and}\ \ \sum_{k=0}^n\frac{\binom{2n+1}k\binom{2n+1-k}k}{4^k}=\frac{\binom{4n+1}{2n+1}}{4^n}.
$$
So when $p\equiv1\pmod4$, by Lemma \ref{mor} we have
\begin{align*}
2^{\frac{9p-9}2}\sum_{k=0}^{\frac{p-3}2}\frac{\binom{\frac{p-1}2}{2k}\binom{2k}k}{4^k}&=2^{\frac{9p-9}2}\sum_{k=0}^{\frac{p-1}4}\frac{\binom{\frac{p-1}2}{k}\binom{\frac{p-1}2-k}k}{4^k}=2^{4p-4}\binom{p-1}{\frac{p-1}2}\equiv(-1)^{\frac{p-1}2}2^{6p-6}\\
&\equiv(-1)^{(p-1)/2}\left(1+6pq_p(2)+15p^2q_p(2)^2\right)\pmod{p^3}.
\end{align*}
And when $p\equiv3\pmod4$, we have
\begin{align*}
2^{\frac{9p-9}2}\sum_{k=0}^{\frac{p-3}2}\frac{\binom{\frac{p-1}2}{2k}\binom{2k}k}{4^k}&=2^{\frac{9p-9}2}\sum_{k=0}^{\frac{p-3}4}\frac{\binom{\frac{p-1}2}{k}\binom{\frac{p-1}2-k}k}{4^k}=2^{4p-3}\binom{p-2}{\frac{p-1}2}\equiv(-1)^{\frac{p-1}2}2^{6p-6}\\
&\equiv(-1)^{(p-1)/2}\left(1+6pq_p(2)+15p^2q_p(2)^2\right)\pmod{p^3}.
\end{align*}
Therefore the proof of Lemma \ref{p-12k} is finished.
\end{proof}
\begin{lem}\label{p-12khk} For any prime $p>3$, we have
$$
2^{\frac{9p-9}2}\sum_{k=0}^{\frac{p-3}2}\frac{\binom{\frac{p-1}2}{2k}\binom{2k}kH_k}{4^k}\equiv-3(-1)^{\frac{p-1}2}\left(2q_p(2)+11pq_p(2)^2\right)\pmod{p^2}.
$$
\end{lem}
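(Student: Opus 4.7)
The plan is to parallel the proof of Lemma \ref{p-12k}, replacing the Chu--Vandermonde-type identity used there by its harmonic-number analogue. Concretely, I will first establish the identity
$$\sum_{k=0}^{\lfloor m/2\rfloor}\binom{m}{2k}\binom{2k}{k}\frac{H_k}{4^k} = \frac{\binom{2m}{m}}{2^m}\bigl(3H_m-2H_{2m}\bigr),$$
valid for every positive integer $m$. Its unweighted analogue (remove $H_k$, replace the right-hand side by $\binom{2m}{m}/2^m$) is precisely what underlies Lemma \ref{p-12k} in its two parity cases, so this is the natural identity to introduce here.

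To prove the identity, I use the factorization $\binom{m}{2k}\binom{2k}{k}=4^k(-m/2)_k((1-m)/2)_k/(k!)^2$, which recasts $\sum_k\binom{m}{2k}\binom{2k}{k}z^k/4^k$ as the Gauss series ${}_2F_1(-m/2,(1-m)/2;1;z)$. Differentiating ${}_2F_1(-m/2,(1-m)/2;c;1)$ in $c$ at $c=1$ introduces the factor $-H_k$ into each term of the $k$-sum, while on the closed-form side Gauss's summation theorem produces the digamma combination $\psi(1)+\psi(m+\tfrac12)-\psi(1+\tfrac{m}{2})-\psi(\tfrac{m+1}{2})$. Applying the standard evaluation $\psi(N+\tfrac12)=-\gamma-2\log 2+2H_{2N}-H_N$ collapses this (uniformly in the parity of $m$) to $2H_{2m}-3H_m$, which gives the claimed identity up to sign.

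Having the identity, I apply it with $m=(p-1)/2$, so $2m=p-1$. Since $(9p-9)/2-(p-1)/2=4p-4$, the quantity in the lemma equals
$$2^{4p-4}\binom{p-1}{(p-1)/2}\bigl(3H_{(p-1)/2}-2H_{p-1}\bigr).$$
By Lemma \ref{mor}, $\binom{p-1}{(p-1)/2}\equiv(-1)^{(p-1)/2}4^{p-1}\pmod{p^3}$, hence $2^{4p-4}\binom{p-1}{(p-1)/2}\equiv(-1)^{(p-1)/2}(1+pq_p(2))^6\equiv(-1)^{(p-1)/2}\bigl(1+6pq_p(2)+15p^2q_p(2)^2\bigr)\pmod{p^3}$. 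Combining Wolstenholme's $H_{p-1}\equiv 0\pmod{p^2}$ with the classical congruence $H_{(p-1)/2}\equiv -2q_p(2)+pq_p(2)^2\pmod{p^2}$ gives $3H_{(p-1)/2}-2H_{p-1}\equiv -6q_p(2)+3pq_p(2)^2\pmod{p^2}$, and expanding the product modulo $p^2$ yields $-3(-1)^{(p-1)/2}\bigl(2q_p(2)+11pq_p(2)^2\bigr)$, as desired.

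The principal obstacle is the master identity. The hypergeometric route is clean; alternatively, staying closer to the spirit of Lemma \ref{p-12k}, one can split by the parity of $m$ and derive the identity from the two closed forms $\sum_k\binom{2n}{k}\binom{2n-k}{k}/4^k=\binom{4n}{2n}/4^n$ and $\sum_k\binom{2n+1}{k}\binom{2n+1-k}{k}/4^k=\binom{4n+1}{2n+1}/4^n$ by introducing an auxiliary parameter and differentiating, or via a WZ pair constructed by Zeilberger's algorithm. Once the identity is secured, the remainder is a routine congruence computation.
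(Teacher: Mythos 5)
Your proposal is correct and follows essentially the same route as the paper: the paper's proof rests on the two parity-split identities $\sum_{k}\binom{2n}{k}\binom{2n-k}{k}H_k/4^k=\binom{4n}{2n}(3H_{2n}-2H_{4n})/4^n$ and its odd counterpart (stated there without proof), which are exactly your single uniform identity after the rewriting $\binom{m}{2k}\binom{2k}{k}=\binom{m}{k}\binom{m-k}{k}$, and the concluding computation via Morley's congruence, $H_{p-1}\equiv0\pmod{p^2}$ and $H_{(p-1)/2}\equiv-2q_p(2)+pq_p(2)^2\pmod{p^2}$ is identical. The only added value is that you actually derive the master identity (by differentiating Gauss's ${}_2F_1$ evaluation in the parameter $c$), which the paper omits.
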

\begin{proof}
First we have the following two identities:
$$
\sum_{k=0}^n\frac{\binom{2n}k\binom{2n-k}kH_k}{4^k}=\frac{\binom{4n}{2n}}{4^n}(3H_{2n}-2H_{4n}),$$
$$
\sum_{k=0}^n\frac{\binom{2n+1}k\binom{2n+1-k}kH_k}{4^k}=\frac{\binom{4n+1}{2n+1}}{4^n}\left(3H_{2n+1}-2H_{4n+2}\right).
$$
So when $p\equiv1\pmod4$, by Lemma \ref{mor}, \cite[Theorem 3.2]{sun-jnt-2008} and (\ref{hp-1}) we have
\begin{align*}
&2^{\frac{9p-9}2}\sum_{k=0}^{\frac{p-3}2}\frac{\binom{\frac{p-1}2}{2k}\binom{2k}kH_k}{4^k}=2^{\frac{9p-9}2}\sum_{k=0}^{\frac{p-1}4}\frac{\binom{\frac{p-1}2}{k}\binom{\frac{p-1}2-k}kH_k}{4^k}=2^{4p-4}\binom{p-1}{\frac{p-1}2}(3H_{\frac{p-1}2}-2H_{p-1})\\
&\equiv3(-1)^{\frac{p-1}2}2^{6p-6}H_{\frac{p-1}2}\equiv3(-1)^{(p-1)/2}\left(1+6pq_p(2)\right)(-2q_p(2)+pq_p(2))\\
&\equiv-3(-1)^{\frac{p-1}2}\left(2q_p(2)+11pq_p(2)^2\right)\pmod{p^2}.
\end{align*}
And when $p\equiv3\pmod4$, we have
\begin{align*}
&2^{\frac{9p-9}2}\sum_{k=0}^{\frac{p-3}2}\frac{\binom{\frac{p-1}2}{2k}\binom{2k}kH_k}{4^k}=2^{\frac{9p-9}2}\sum_{k=0}^{\frac{p-3}4}\frac{\binom{\frac{p-1}2}{k}\binom{\frac{p-1}2-k}kH_k}{4^k}=2^{4p-4}\binom{p-1}{\frac{p-1}2}(3H_{\frac{p-1}2}-2H_{p-1})\\
&\equiv3(-1)^{\frac{p-1}2}2^{6p-6}H_{\frac{p-1}2}\equiv3(-1)^{(p-1)/2}\left(1+6pq_p(2)\right)(-2q_p(2)+pq_p(2))\\
&\equiv-3(-1)^{\frac{p-1}2}\left(2q_p(2)+11pq_p(2)^2\right)\pmod{p^2}.
\end{align*}
Therefore the proof of Lemma \ref{p-12khk} is completed.
\end{proof}
\begin{lem}\label{p-12khkhk2} For any prime $p>3$, we have
$$
2^{\frac{9p-9}2}\sum_{k=0}^{\frac{p-3}2}\frac{\binom{\frac{p-1}2}{2k}\binom{2k}k\left(H_k^2+H_{k}^{(2)}\right)}{4^k}\equiv36(-1)^{\frac{p-1}2}q_p(2)^2\pmod{p}.
$$
\end{lem}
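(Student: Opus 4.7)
The plan is to follow the same template as Lemmas~\ref{p-12k} and~\ref{p-12khk}. First I would establish the closed-form evaluations
\begin{align*}
\sum_{k=0}^{n}\frac{\binom{2n}{k}\binom{2n-k}{k}(H_k^2+H_k^{(2)})}{4^k}&=\frac{\binom{4n}{2n}}{4^n}\,\Psi_1(n),\\
\sum_{k=0}^{n}\frac{\binom{2n+1}{k}\binom{2n+1-k}{k}(H_k^2+H_k^{(2)})}{4^k}&=\frac{\binom{4n+1}{2n+1}}{4^n}\,\Psi_2(n),
\end{align*}
where $\Psi_1(n)$ and $\Psi_2(n)$ are quadratic polynomials in the harmonic numbers $H_{2n},H_{4n},H_{2n}^{(2)},H_{4n}^{(2)}$ (resp. $H_{2n+1},H_{4n+2},H_{2n+1}^{(2)},H_{4n+2}^{(2)}$). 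A clean way to produce such identities, extending the method that yields the $1$-weighted and $H_k$-weighted identities quoted in the two previous lemmas, is to introduce an auxiliary parameter $a$ into the Pochhammer structure and expand at $a=0$ to second order; the key point is that
$$\frac{1}{\binom{k+a}{k}}=1-aH_k+\frac{a^2}{2}\bigl(H_k^2+H_k^{(2)}\bigr)+O(a^3),$$
so the $a^2$-coefficient of a suitable $a$-deformation of the base identity produces precisely the weight $H_k^2+H_k^{(2)}$, while the $a^0$ and $a^1$ coefficients recover the two identities already used.

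Next I would specialize to $n=(p-1)/4$ when $p\equiv 1\pmod 4$ and to $n=(p-3)/4$ when $p\equiv 3\pmod 4$, converting the left-hand side via the algebraic identity $\binom{(p-1)/2}{2k}\binom{2k}{k}=\binom{(p-1)/2}{k}\binom{(p-1)/2-k}{k}$, exactly as in the two preceding lemmas. The scaling then reduces, by Morley's theorem (Lemma~\ref{mor}), to
$$2^{4p-4}\binom{p-1}{(p-1)/2}\,\Psi_i\equiv(-1)^{(p-1)/2}2^{6p-6}\,\Psi_i\pmod{p^3},$$
and since the target congruence is only modulo $p$ we may replace $2^{6p-6}$ by $1$.

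The final step is to evaluate $\Psi_i$ modulo $p$ using the classical congruences
$$H_{p-1}\equiv 0,\quad H_{p-1}^{(2)}\equiv 0,\quad H_{(p-1)/2}\equiv -2q_p(2),\quad H_{(p-1)/2}^{(2)}\equiv 0\pmod p.$$
At $n=(p-1)/4$ we have $2n=(p-1)/2$ and $4n=p-1$, so every $H^{(2)}$ contribution in $\Psi_1$ vanishes and the surviving $H$-quadratic piece yields $(3H_{2n}-2H_{4n})^2\equiv 9H_{(p-1)/2}^2\equiv 36\,q_p(2)^2\pmod p$; combined with the sign $(-1)^{(p-1)/2}$ this gives $36(-1)^{(p-1)/2}q_p(2)^2$, as required. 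The case $p\equiv 3\pmod 4$ proceeds identically with $\Psi_2$ at $n=(p-3)/4$. The main obstacle in this plan is the derivation of the closed forms $\Psi_1$ and $\Psi_2$; once they are in hand, everything downstream is a routine parallel of Lemmas~\ref{p-12k} and~\ref{p-12khk}.
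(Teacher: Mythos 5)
Your proposal follows the paper's proof essentially verbatim: the paper likewise establishes the two closed-form identities (with $\Psi_1=\bigl(5H_{2n}^{(2)}-4H_{4n}^{(2)}\bigr)+\bigl(3H_{2n}-2H_{4n}\bigr)^2$ and the analogous $\Psi_2$), specializes at $n=(p-1)/4$ or $n=(p-3)/4$, applies Morley's congruence, and reduces via $H_{p-1}\equiv H_{p-1}^{(2)}\equiv H_{(p-1)/2}^{(2)}\equiv0$ and $H_{(p-1)/2}\equiv-2q_p(2)\pmod p$ to $9H_{(p-1)/2}^2\equiv36q_p(2)^2$. Your predicted shape of $\Psi_1,\Psi_2$ and the downstream evaluation are exactly right, so the only remaining work is writing out the two identities explicitly.
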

\begin{proof}
First we have the following two identities:
$$
\sum_{k=0}^n\frac{\binom{2n}k\binom{2n-k}k\left(H_k^2+H_k^{(2)}\right)}{4^k}=\frac{\binom{4n}{2n}}{4^n}\left(5H_{2n}^{(2)}-4H_{4n}^{(2)}\right)+\frac{\binom{4n}{2n}}{4^n}\left(3H_{2n}-2H_{4n}\right)^2,$$
$$
\sum_{k=0}^n\frac{\binom{2n+1}k\binom{2n+1-k}k\left(H_k^2+H_k^{(2)}\right)}{4^k}=\frac{\binom{4n+1}{2n+1}}{4^n}\left(\left(5H_{2n+1}^{(2)}-4H_{4n+2}^{(2)}\right)+\left(3H_{2n+1}-2H_{4n+2}\right)^2\right).
$$
So when $p\equiv1\pmod4$, by Lemma \ref{mor}, \cite[Theorem 3.5]{sun-jnt-2008} and (\ref{hp-1}) we have
\begin{align*}
&2^{\frac{9p-9}2}\sum_{k=0}^{\frac{p-3}2}\frac{\binom{\frac{p-1}2}{2k}\binom{2k}k\left(H_k^2+H_k^{(2)}\right)}{4^k}=2^{\frac{9p-9}2}\sum_{k=0}^{\frac{p-1}4}\frac{\binom{\frac{p-1}2}{k}\binom{\frac{p-1}2-k}k\left(H_k^2+H_k^{(2)}\right)}{4^k}\\
&=2^{4p-4}\binom{p-1}{\frac{p-1}2}\left(\left(5H_{(p-1)/2}^{(2)}-4H_{p-1}^{(2)}\right)+\left(3H_{(p-1)/2}-2H_{p-1}\right)^2\right)\\
&\equiv(-1)^{\frac{p-1}2}9H_{\frac{p-1}2}^2\equiv36(-1)^{(p-1)/2}q_p(2)^2\pmod{p}.
\end{align*}
And when $p\equiv3\pmod4$, we have
\begin{align*}
&2^{\frac{9p-9}2}\sum_{k=0}^{\frac{p-3}2}\frac{\binom{\frac{p-1}2}{2k}\binom{2k}k\left(H_k^2+H_k^{(2)}\right)}{4^k}=2^{\frac{9p-9}2}\sum_{k=0}^{\frac{p-3}4}\frac{\binom{\frac{p-1}2}{k}\binom{\frac{p-1}2-k}k\left(H_k^2+H_k^{(2)}\right)}{4^k}\\
&=2^{4p-4}\binom{p-1}{\frac{p-1}2}\left(\left(5H_{(p-1)/2}^{(2)}-4H_{p-1}^{(2)}\right)+\left(3H_{(p-1)/2}-2H_{p-1}\right)^2\right)\\
&\equiv(-1)^{\frac{p-1}2}9H_{\frac{p-1}2}^2\equiv36(-1)^{(p-1)/2}q_p(2)^2\pmod{p}.
\end{align*}
Therefore the proof of Lemma \ref{p-12khkhk2} is complete.
\end{proof}
\begin{lem}\label{mos}{\rm [See \cite{MOS}]} Let $x$ and $y$ be variables and $n\in\mathbb{N}$. Then
\begin{align*}
&{\rm (i)}.\  B_{2n+1}=0.\\
&{\rm (ii)}.\ B_n(1-x)=(-1)^nB_n(x).\\
&{\rm (iii)}.\  B_n(x+y)=\sum_{k=0}^n\binom{n}kB_{n-k}(y)x^k.\\
&{\rm (iv)}.\  E_{n-1}(x)=\frac{2^n}n\left(B_n\left(\frac{x+1}2\right)-B_n\left(\frac x2\right)\right).
\end{align*}
\end{lem}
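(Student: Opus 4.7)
The plan is to derive all four identities directly from the defining exponential generating functions
\[
\frac{t}{e^t-1}=\sum_{n\ge0}B_n\frac{t^n}{n!},\qquad \frac{te^{xt}}{e^t-1}=\sum_{n\ge0}B_n(x)\frac{t^n}{n!},\qquad \frac{2e^{xt}}{e^t+1}=\sum_{n\ge0}E_n(x)\frac{t^n}{n!},
\]
and to compare coefficients of $t^n$ after each algebraic manipulation. No combinatorial identities are needed; the work is entirely formal power series.

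For (i) I would add the linear correction $t/2$ to the Bernoulli generating function and observe that
\[
\frac{t}{e^t-1}+\frac{t}{2}=\frac{t}{2}\cdot\frac{e^t+1}{e^t-1}=\frac{t}{2}\coth\!\bigl(t/2\bigr),
\]
which is invariant under $t\mapsto -t$, hence an even series. Since $B_1=-1/2$ cancels against $t/2$, all odd-indexed coefficients for $n\ge 1$ must vanish, yielding $B_{2n+1}=0$. For (ii) I would substitute $x\mapsto 1-x$ in the Bernoulli-polynomial generating function and then replace $t$ by $-t$: a direct simplification gives $\frac{t e^{(1-x)t}}{e^t-1}=\frac{(-t)e^{x(-t)}}{e^{-t}-1}$, so $\sum_n B_n(1-x)t^n/n!=\sum_n B_n(x)(-t)^n/n!$, from which $B_n(1-x)=(-1)^nB_n(x)$. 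Part (iii) is the cleanest: factor $e^{(x+y)t}=e^{xt}\cdot e^{yt}$, write
\[
\sum_{n\ge0}B_n(x+y)\frac{t^n}{n!}=\Biggl(\sum_{j\ge0}B_j(y)\frac{t^j}{j!}\Biggr)\Biggl(\sum_{k\ge0}\frac{(xt)^k}{k!}\Biggr),
\]
and read off the $t^n$ coefficient via Cauchy product to obtain $B_n(x+y)=\sum_{k=0}^n\binom{n}{k}B_{n-k}(y)x^k$.

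Part (iv) is the piece that requires the most care, and I expect it to be the main obstacle because of the bookkeeping with the shift $n\mapsto n-1$ and the factor $2^n/n$. The idea is the classical rewriting
\[
\frac{2e^{xt}}{e^t+1}=\frac{2e^{xt}(e^t-1)}{e^{2t}-1}=\frac{2e^{(x+1)t}-2e^{xt}}{e^{2t}-1}.
\]
Setting $u=2t$ and inserting the factor $u$ upstairs and downstairs converts the right side into
\[
\frac{2}{u}\Biggl(\frac{u\,e^{\frac{x+1}{2}u}}{e^u-1}-\frac{u\,e^{\frac{x}{2}u}}{e^u-1}\Biggr)=\frac{2}{u}\sum_{n\ge1}\bigl(B_n(\tfrac{x+1}{2})-B_n(\tfrac{x}{2})\bigr)\frac{u^n}{n!},
\]
the $n=0$ terms cancelling. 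Substituting back $u=2t$ gives a series in $t^{n-1}$ with coefficient $\frac{2^n}{n!}(B_n(\tfrac{x+1}{2})-B_n(\tfrac{x}{2}))$. Matching this against $\sum_{n\ge1}E_{n-1}(x)\frac{t^{n-1}}{(n-1)!}$ and multiplying by $(n-1)!$ yields exactly $E_{n-1}(x)=\frac{2^n}{n}\bigl(B_n(\tfrac{x+1}{2})-B_n(\tfrac{x}{2})\bigr)$. The only real trap is to keep the index shift consistent: the constant term on both sides must cancel separately, so one must sum from $n\ge1$, not $n\ge0$, before dividing by $u$.
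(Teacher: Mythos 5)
Your derivations are all correct, but note that the paper offers no proof of this lemma at all: it is quoted directly from the handbook of Magnus, Oberhettinger and Soni \cite{MOS}, so there is no internal argument to compare against, and your generating-function computation is simply the standard route to these tabulated formulas. Each step checks out: the corrected series $\frac{t}{e^t-1}+\frac{t}{2}=\frac{t}{2}\coth(t/2)$ is indeed even, which kills the odd coefficients beyond $B_1$; the identity $\frac{te^{(1-x)t}}{e^t-1}=\frac{(-t)e^{x(-t)}}{e^{-t}-1}$ gives (ii); the Cauchy product of $\frac{te^{yt}}{e^t-1}$ with $e^{xt}$ gives (iii); and in (iv) the rewriting $\frac{2e^{xt}}{e^t+1}=\frac{2e^{(x+1)t}-2e^{xt}}{e^{2t}-1}$ with $u=2t$, together with the cancellation of the $n=0$ terms (since $B_0(y)\equiv1$), matches coefficients to yield exactly $E_{n-1}(x)=\frac{2^n}{n}\bigl(B_n\bigl(\frac{x+1}{2}\bigr)-B_n\bigl(\frac{x}{2}\bigr)\bigr)$, with the index bookkeeping $\frac{E_{n-1}(x)}{(n-1)!}=\frac{2^n}{n!}\bigl(B_n\bigl(\frac{x+1}{2}\bigr)-B_n\bigl(\frac{x}{2}\bigr)\bigr)$ handled correctly. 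One pedantic caveat you implicitly resolved but should state explicitly: under the convention $B_1=-\frac12$, item (i) as literally written fails at $n=0$, so it must be read for $n\ge1$ (equivalently with $\mathbb{N}=\{1,2,\ldots\}$); your observation that the $t/2$ correction absorbs $B_1$ is precisely why only the indices $2n+1\ge3$ vanish. Since the paper invokes (i) only for odd Bernoulli numbers of index at least $3$ (in the proof of Lemma \ref{p-12khk2}), this convention issue has no downstream effect.
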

\begin{lem}\label{p-12khk2} For any prime $p>3$, we have
$$
\sum_{k=0}^{\frac{p-3}2}\frac{\binom{\frac{p-1}2}{2k}\binom{2k}kH_{k}^{(2)}}{4^k}\equiv\sum_{k=1}^{\lfloor\frac{p-1}4\rfloor}\frac{\binom{4k}{2k}\binom{2k}kH_{k}^{(2)}}{{64}^k}\equiv-E_{p-3}\left(\frac14\right)\pmod{p}.
$$
\end{lem}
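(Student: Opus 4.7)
The plan is to prove the two congruences in order. For the first, I begin by applying the identity $\binom{n}{2k}\binom{2k}{k}=\binom{n}{k}\binom{n-k}{k}$ with $n=(p-1)/2$, rewriting the left-hand side as $\sum_{k=0}^{\lfloor(p-1)/4\rfloor}\binom{(p-1)/2}{k}\binom{(p-1)/2-k}{k}H_k^{(2)}/4^k$; the upper limit naturally drops because $\binom{(p-1)/2}{2k}=0$ whenever $k>(p-1)/4$. I then invoke the two mod-$p$ evaluations
\[
\binom{(p-1)/2}{k}\equiv\frac{(-1)^k\binom{2k}{k}}{4^k}\pmod p,\qquad \binom{(p-1)/2-k}{k}\equiv\frac{(-1)^k\binom{4k}{2k}}{4^k}\pmod p;
\]
the first is classical, and the second follows from $(p-1)/2-k\equiv-1/2-k\pmod p$ together with the identity $\binom{-1/2-k}{k}=(-1)^k(2k+1)(2k+3)\cdots(4k-1)/(2^k k!)=(-1)^k\binom{4k}{2k}/4^k$, which is a short double-factorial manipulation. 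Multiplying the two congruences gives $\binom{(p-1)/2}{2k}\binom{2k}{k}/4^k\equiv\binom{4k}{2k}\binom{2k}{k}/64^k\pmod p$, which yields the first of the two asserted congruences (the $k=0$ term may be dropped since $H_0^{(2)}=0$).

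For the second congruence, my plan is to combine two routes. On one side, Lemma \ref{mos}(iv) expresses $E_{p-3}(1/4)=\tfrac{2^{p-2}}{p-2}\bigl(B_{p-2}(5/8)-B_{p-2}(1/8)\bigr)$, and standard mod-$p$ expansions of $B_{p-2}$ at rational arguments convert this into a combination of sums of reciprocal squares over residue classes modulo $8$. On the other side, I would use the Wolstenholme congruence \eqref{hp-1} to write $H_k^{(2)}\equiv-\sum_{j=k+1}^{p-1}1/j^2\pmod p$ and swap the order of summation, reducing the sum on the left to a linear combination $\sum_{j=1}^{p-1}j^{-2}T_j$, where $T_j=\sum_{k\ge j}\binom{4k}{2k}\binom{2k}{k}/64^k$ is a truncated partial sum ending at $\lfloor(p-1)/4\rfloor$, viewed modulo $p$. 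The concluding step is to match the two resulting representations term-by-term modulo $p$.

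The main obstacle is this final identification. Because the formal series $\sum_{k\ge0}\binom{4k}{2k}\binom{2k}{k}/64^k$ diverges at the endpoint, the partial sums $T_j$ cannot be closed in a single compact expression and must be tracked in truncated form; either a Gosper-style WZ certificate giving a closed formula for the relevant partial sums modulo $p$, or an appeal to a known finite identity of Sun relating an $H_k^{(2)}$-weighted central binomial sum directly to $E_{p-3}(1/4)$, is what will carry the proof across. As in Lemmas \ref{p-12k}--\ref{p-12khkhk2}, the parity cases $p\equiv1\pmod 4$ and $p\equiv3\pmod 4$ will need to be handled separately, since $\lfloor(p-1)/4\rfloor$ takes different values in the two cases.
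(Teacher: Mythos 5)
Your derivation of the first congruence is fine: the rewriting via $\binom{n}{2k}\binom{2k}{k}=\binom{n}{k}\binom{n-k}{k}$ and the two mod-$p$ evaluations of $\binom{(p-1)/2}{k}$ and $\binom{(p-1)/2-k}{k}$ correctly reproduce the paper's reduction $\binom{(p-1)/2}{2k}\binom{2k}{k}/4^k\equiv\binom{4k}{2k}\binom{2k}{k}/64^k\pmod p$, just by a slightly different route. The problem is the second congruence, which is where the entire content of the lemma lives, and there your proposal does not contain a proof. You describe a plan (apply $H_k^{(2)}\equiv-\sum_{j>k}j^{-2}\pmod p$, interchange the order of summation, and match against the Bernoulli-polynomial expansion of $E_{p-3}(1/4)$) and then explicitly concede that the resulting partial sums $T_j$ of $\binom{4k}{2k}\binom{2k}{k}/64^k$ cannot be closed and that the proof would need either a WZ certificate for them or ``a known finite identity of Sun'' relating the $H_k^{(2)}$-weighted sum directly to $E_{p-3}(1/4)$. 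The latter is essentially the statement being proved, and the former does not exist in the form you hope: $\binom{4k}{2k}\binom{2k}{k}/64^k$ is not Gosper-summable, so its partial sums have no hypergeometric closed form, and the interchange-of-summation route stalls exactly at the point you flag as ``the main obstacle.''

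The paper's proof closes this gap with a different mechanism that never touches partial sums of the unweighted series. It writes $\binom{4k}{2k}\binom{2k}{k}/64^k=\binom{-1/4}{k}\binom{-3/4}{k}$, reduces one of the two factors modulo $p$ to $\binom{(p-1)/4}{k}$ or $\binom{(p-3)/4}{k}$ according to $p\bmod 4$, and then invokes the closed-form finite identities
\begin{align*}
\sum_{k=1}^n\binom{n}k\binom{-3/4}kH_k^{(2)}&=(-1)^n\binom{-1/4}n\left(H_n^{(2)}-\sum_{k=1}^n\frac{(-1)^k}{k^2\binom{-1/4}k}\right),\\
\sum_{k=1}^n\frac{(-1)^k}{k^2\binom{n}k}&=H_n^{(2)}+2\sum_{k=1}^n\frac{(-1)^k}{k^2},
\end{align*}
(together with the companion identity with $-1/4$ and $-3/4$ swapped), which collapse the whole sum to $-2(-1)^{\lfloor(p-1)/4\rfloor}\sum_{k=1}^{\lfloor(p-1)/4\rfloor}(-1)^k/k^2$. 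Only then does the Bernoulli-polynomial machinery (the sum-of-powers formula over residue classes modulo $8$ and Lemma \ref{mos}(iv)) enter, and that endgame does match the one you sketched. So your proposal is missing the decisive idea --- the pair of $H_k^{(2)}$-weighted binomial identities --- and without it the argument does not go through.
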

\begin{proof} It is easy to see that
$$
\sum_{k=0}^{\frac{p-3}2}\frac{\binom{\frac{p-1}2}{2k}\binom{2k}kH_{k}^{(2)}}{4^k}=\sum_{k=0}^{\lfloor\frac{p-1}4\rfloor}\frac{\binom{\frac{p-1}2}{2k}\binom{2k}kH_{k}^{(2)}}{4^k}\equiv\sum_{k=1}^{\lfloor\frac{p-1}4\rfloor}\frac{\binom{4k}{2k}\binom{2k}kH_{k}^{(2)}}{{64}^k}\pmod p
$$
since $\binom{\frac{p-1}2}{2k}\equiv\binom{4k}{2k}/{16}^k\pmod p$ for each $k\in\{0,1,\ldots,\lfloor\frac{p-1}4\rfloor\}$ and $H_0^{(2)}=0$. So we just need to verify that
$$
\sum_{k=1}^{\lfloor\frac{p-1}4\rfloor}\frac{\binom{4k}{2k}\binom{2k}kH_{k}^{(2)}}{{64}^k}\equiv-E_{p-3}\left(\frac14\right)\pmod{p}.
$$
We have three identities:
$$
\sum_{k=1}^n\binom{n}k\binom{-3/4}kH_k^{(2)}=(-1)^n\binom{-1/4}n\left(H_n^{(2)}-\sum_{k=1}^n\frac{(-1)^k}{k^2\binom{-1/4}k}\right),
$$
$$
\sum_{k=1}^n\binom{n}k\binom{-1/4}kH_k^{(2)}=(-1)^n\binom{-3/4}n\left(H_n^{(2)}-\sum_{k=1}^n\frac{(-1)^k}{k^2\binom{-3/4}k}\right)
$$
and
$$
\sum_{k=1}^n\frac{(-1)^k}{k^2\binom{n}k}=H_n^{(2)}+2\sum_{k=1}^n\frac{(-1)^k}{k^2}.
$$
It is known that $\frac{\binom{4k}{2k}\binom{2k}k}{{64}^k}=\binom{-1/4}k\binom{-3/4}k$. Thus, if $p\equiv1\pmod4$, then
\begin{align*}
\sum_{k=1}^{\lfloor\frac{p-1}4\rfloor}\frac{\binom{4k}{2k}\binom{2k}kH_{k}^{(2)}}{{64}^k}&\equiv\sum_{k=1}^{\frac{p-1}4}\binom{\frac{p-1}4}k\binom{-3/4}kH_k^{(2)}\equiv(-1)^{\frac{p-1}4}\left(H_{\frac{p-1}4}^{(2)}-\sum_{k=1}^{\frac{p-1}4}\frac{(-1)^k}{k^2\binom{\frac{p-1}4}k}\right)\\
&=-2(-1)^{\frac{p-1}4}\sum_{k=1}^{\frac{p-1}4}\frac{(-1)^k}{k^2}\pmod p.
\end{align*}
If $p\equiv3\pmod 4$, then
\begin{align*}
\sum_{k=1}^{\lfloor\frac{p-1}4\rfloor}\frac{\binom{4k}{2k}\binom{2k}kH_{k}^{(2)}}{{64}^k}&\equiv\sum_{k=1}^{\frac{p-3}4}\binom{\frac{p-3}4}k\binom{-1/4}kH_k^{(2)}\equiv(-1)^{\frac{p-3}4}\left(H_{\frac{p-3}4}^{(2)}-\sum_{k=1}^{\frac{p-1}4}\frac{(-1)^k}{k^2\binom{\frac{p-3}4}k}\right)\\
&=-2(-1)^{\frac{p-3}4}\sum_{k=1}^{\frac{p-3}4}\frac{(-1)^k}{k^2}\pmod p.
\end{align*}
Hence
$$
\sum_{k=1}^{\lfloor\frac{p-1}4\rfloor}\frac{\binom{4k}{2k}\binom{2k}kH_{k}^{(2)}}{{64}^k}\equiv-2(-1)^{\lfloor\frac{p-1}4\rfloor}\sum_{k=1}^{\lfloor\frac{p-1}4\rfloor}\frac{(-1)^k}{k^2}\pmod p.
$$
In view of \cite[Lemma2.1]{sun-jnt-2008}, for any $p,m\in\mathbb{N}$ and $k,r\in\mathbb{Z}$ with $k\geq0$, we have
$$
\sum_{\substack{x=0\\{x\equiv r\pmod{m}}}}^{p-1}x^k=\frac{m^k}{k+1}\left(B_{k+1}\left(\frac{p}{m}+\left\{\frac{r-p}{m}\right\}\right)-B_{k+1}\left(\left\{\frac{r}{m}\right\}\right)\right).
$$
By using this identity, (i) and (iii) of Lemma \ref{mos}, we have
\begin{align*}
\sum_{j=1}^{\lfloor\frac{p-1}8\rfloor}\frac1{j^2}=64\sum_{\substack{k=1\\k\equiv0\pmod8}}^{p-1}\frac{1}{k^2}\equiv64\sum_{\substack{x=0\\x\equiv0\pmod8}}^{p-1}x^{p-3}\equiv-\frac12B_{p-2}\left(\left\{-\frac p8\right\}\right)\pmod p
\end{align*}
and
\begin{align*}
\sum_{j=1}^{\lfloor\frac{p-1}8\rfloor}\frac1{(2j-1)^2}&=16\sum_{j=1}^{\lfloor\frac{p-1}8\rfloor}\frac1{(8j-4)^2}=16\sum_{\substack{j=1\\j\equiv4\pmod8}}^{p-1}\frac1{j^2}\equiv16\sum_{\substack{x=0\\j\equiv4\pmod8}}^{p-1}x^{p-3}\\
&\equiv-\frac18B_{p-2}\left(\left\{\frac{4-p}8\right\}\right)\pmod p.
\end{align*}
So
\begin{align*}
\sum_{k=1}^{\lfloor\frac{p-1}4\rfloor}\frac{(-1)^k}{k^2}&=\frac14\sum_{k=1}^{\lfloor\frac{p-1}8\rfloor}\frac{1}{k^2}-\sum_{k=1}^{\lfloor\frac{p-1}8\rfloor}\frac{1}{(2k-1)^2}\\
&\equiv\frac18\left(B_{p-2}\left(\left\{\frac{4-p}8\right\}\right)-B_{p-2}\left(\left\{-\frac p8\right\}\right)\right)\pmod p.
\end{align*}
Hence
$$
\sum_{k=1}^{\lfloor\frac{p-1}4\rfloor}\frac{\binom{4k}{2k}\binom{2k}kH_{k}^{(2)}}{{64}^k}\equiv-\frac14(-1)^{\lfloor\frac{p-1}4\rfloor}\left(B_{p-2}\left(\left\{\frac{4-p}8\right\}\right)-B_{p-2}\left(\left\{-\frac p8\right\}\right)\right)\pmod p.
$$
Then we immediately get the desired result by (iv) of Lemma \ref{mos}.
\end{proof}
\begin{lem} \label{G12} For any primes $p>3$, we have
$$
\sum_{k=1}^{(p-1)/2}G\left(\frac{p+1}2,k\right)\equiv p\left(\frac{-1}p\right)+\frac{p^3}4\left(\frac{2}p\right)E_{p-3}\left(\frac14\right)\pmod{p^4}.
$$
\end{lem}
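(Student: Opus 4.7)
The plan is to bring each term $G(\frac{p+1}{2},k)$ into a shape matching the sums in Lemmas 2.3--2.6 and then invoke those lemmas term by term. First I shift the summation index by setting $k \mapsto k+1$, rewriting
\[
\sum_{k=1}^{(p-1)/2} G\!\left(\frac{p+1}{2},k\right) = \sum_{k=0}^{(p-3)/2} G\!\left(\frac{p+1}{2},k+1\right),
\]
since this pairs the ratio $((p-1)/2)!/((p-1)/2-2k)!$ naturally with $(2k)!\binom{(p-1)/2}{2k}$, and the squared Pochhammer $(\frac{p}{2}-k)_k^2$ with $(k!)^2$, exactly the building blocks appearing in the subsequent lemmas.

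Next I factor
\[
\left(\frac{p}{2}-k\right)_k = \frac{1}{2^k}\prod_{i=1}^k(p-2i) = (-1)^k k!\prod_{i=1}^k\!\left(1-\frac{p}{2i}\right),
\]
so that (2.4) after the shift becomes
\[
G\!\left(\frac{p+1}{2},k+1\right) = C\cdot\frac{\binom{(p-1)/2}{2k}\binom{2k}{k}}{4^k}\prod_{i=1}^k\!\left(1-\frac{p}{2i}\right)^{-2},
\]
where Morley's theorem (Lemma 2.2) together with the cube $\binom{p-1}{(p-1)/2}^3\equiv(-1)^{(p-1)/2}2^{6p-6}\pmod{p^3}$ collapses the constant $C$ to $p\cdot 2^{(9p-9)/2}$ modulo $p^4$. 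Since $C$ already carries the factor $p$, I only need the correction factor modulo $p^3$; a direct multiplication of geometric series gives
\[
\prod_{i=1}^k\!\left(1-\frac{p}{2i}\right)^{-2}\equiv 1+pH_k+p^2\!\left(\tfrac{1}{2}H_k^2+\tfrac{1}{4}H_k^{(2)}\right)\pmod{p^3}.
\]

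Writing $\tfrac{1}{2}H_k^2+\tfrac{1}{4}H_k^{(2)}=\tfrac{1}{2}(H_k^2+H_k^{(2)})-\tfrac{1}{4}H_k^{(2)}$ aligns the four resulting sums with Lemmas 2.3, 2.4, 2.5, and 2.6 respectively. Applying each lemma in turn, together with Euler's criterion $2^{(9p-9)/2}\equiv(\tfrac{2}{p})\pmod{p}$ needed for the application of Lemma 2.6, yields the contributions $p(-1)^{(p-1)/2}(1+6pq_p(2)+15p^2q_p(2)^2)$ from 2.3, $(-1)^{(p-1)/2}(-6p^2q_p(2)-33p^3q_p(2)^2)$ from 2.4, $18p^3(-1)^{(p-1)/2}q_p(2)^2$ from 2.5, and $\tfrac{p^3}{4}(\tfrac{2}{p})E_{p-3}(1/4)$ from 2.6. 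Summing, the coefficients of $p^2(-1)^{(p-1)/2}q_p(2)$ and of $p^3(-1)^{(p-1)/2}q_p(2)^2$ satisfy $6-6=0$ and $15-33+18=0$, leaving exactly $p(\tfrac{-1}{p})+\tfrac{p^3}{4}(\tfrac{2}{p})E_{p-3}(1/4)$.

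The main obstacle is the simultaneous $p$-adic bookkeeping: one has to truncate Morley's theorem, the $(1-\tfrac{p}{2i})^{-2}$ expansion, and each of Lemmas 2.3--2.6 at compatible precisions, and track the precise rational coefficients $\tfrac12$ and $\tfrac14$ in the quadratic correction. The cancellation $15-33+18=0$ is the most delicate consistency check: it is sensitive to both the cubing of $\binom{p-1}{(p-1)/2}$ in Morley and to the exact Taylor coefficients of $(1-x)^{-2}$, so any sign or rational slip anywhere in the expansion would break the identity.
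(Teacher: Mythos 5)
Your proposal is correct and follows essentially the same route as the paper: starting from (2.9), expanding $\left(\frac p2+1-k\right)_{k-1}^{-2}$ (equivalently your $\prod(1-\frac p{2i})^{-2}$) to get the factor $1+pH_k+\frac{p^2}2H_k^2+\frac{p^2}4H_k^{(2)}$, collapsing the constant via Morley's congruence, and splitting the quadratic part as $\frac12(H_k^2+H_k^{(2)})-\frac14H_k^{(2)}$ so that Lemmas 2.3--2.6 apply, with the same cancellations $6-6=0$ and $15-33+18=0$. The only difference is cosmetic (you shift the index before rather than after the expansion), so nothing further is needed.
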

\begin{proof} It is easy to check that
\begin{align*}
\left(\frac p2+1-k\right)_{k-1}^2&=\left(\frac p2+1-k\right)^2\ldots\left(\frac p2-1\right)^2\\
&\equiv(k-1)!^2\left(1-pH_{k-1}+\frac{p^2}4\left(2H_{k-1}^2-H_{k-1}^{(2)}\right)\right)\pmod{p^3}.
\end{align*}
This with (\ref{Gpk}) yields that, modulo $p^4$ we have
\begin{align*}
G\left(\frac{p+1}2,k\right)&\equiv\frac{32p(-1)^{\frac{p-1}2}\binom{p-1}{\frac{p-1}2}^3}{2^{\frac{3p+3}2}}\frac{\binom{\frac{p-1}2}{2k-2}\binom{2k-2}{k-1}}{\left(1-pH_{k-1}+\frac{p^2}2H_{k-1}^2-\frac{p^2}4H_{k-1}^{(2)}\right)4^k}\\
&\equiv\frac{32p(-1)^{\frac{p-1}2}\binom{p-1}{\frac{p-1}2}^3}{2^{\frac{3p+3}2}}\frac{\binom{\frac{p-1}2}{2k-2}\binom{2k-2}{k-1}}{4^k}\left(1+pH_{k-1}+\frac{p^2}2H_{k-1}^2+\frac{p^2}4H_{k-1}^{(2)}\right).
\end{align*}
So by Lemma \ref{mor}, we have
\begin{align*}
\sum_{k=1}^{(p-1)/2}G\left(\frac{p+1}2,k\right)&\equiv 4p2^{\frac{9p-9}2}\sum_{k=1}^{\frac{p-1}2}\frac{\binom{\frac{p-1}2}{2k-2}\binom{2k-2}{k-1}}{4^k}\left(1+pH_{k-1}+\frac{p^2}2H_{k-1}^2+\frac{p^2}4H_{k-1}^{(2)}\right)\\
&=p2^{\frac{9p-9}2}\sum_{k=0}^{\frac{p-3}2}\frac{\binom{\frac{p-1}2}{2k}\binom{2k}{k}}{4^k}\left(1+pH_{k}+\frac{p^2}2H_{k}^2+\frac{p^2}4H_{k}^{(2)}\right)\pmod{p^4}.
\end{align*}
By Lemmas \ref{p-12k}--\ref{p-12khkhk2}, \ref{p-12khk2} and
$$
2^{(p-1)/2}\equiv\left(\frac2p\right)\left(1+\frac p2q_p(2)-\frac{p^2}8q_p(2)^2\right)\pmod {p^3},
$$
we immediately obtain the desired result of Lemma \ref{G12}.
\end{proof}
\noindent{\it Proof of Theorem \ref{Thsun}}. Combining (\ref{wz1}) with Lemma \ref{G12}, we immediately get that for any prime $p>3$
$$
\sum_{n=0}^{(p-1)/2}\frac{3n+1}{(-8)^n}\binom{2n}n^3\equiv p\left(\frac{-1}p\right)+\frac{p^3}4\left(\frac{2}p\right)E_{p-3}\left(\frac14\right)\pmod{p^4}.
$$
Therefore the proof of Theorem \ref{Thsun} is complete. \qed
\section{Proof of Theorem \ref{Th10ngz}}
By the same WZ pair in Section 2, we have
\begin{align}\label{wz2}
\sum_{n=0}^{p^r-1}F(n,0)=\sum_{k=1}^{p^r-1}G\left(p^r,k\right).
\end{align}
\begin{lem}\label{smor}{\rm (\cite[Lemma 2.4]{long-2011-pjm})} For any prime $p>3$, we have
$$
\binom{p^r-1}{(p^r-1)/2}\equiv(-1)^{(p^r-1)/2}4^{p^r-1}\pmod{p^3}.
$$
\end{lem}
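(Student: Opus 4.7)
The plan is to expand
\begin{equation*}
\binom{p^r-1}{(p^r-1)/2} = (-1)^{(p^r-1)/2} \prod_{j=1}^{(p^r-1)/2}\left(1 - \frac{p^r}{j}\right)
\end{equation*}
directly modulo $p^3$, reducing to Lemma \ref{mor} together with a Wolstenholme/Glaisher-type harmonic identity. The main subtlety is that some $j$'s in the product are divisible by $p$, so a naive Taylor expansion has $p$'s in denominators. My way around this is to group by $p$-adic valuation: writing $j = p^{r-r'} k$ with $(k,p)=1$ and $r' \in \{1, \ldots, r\}$, and using $\lfloor (p^r-1)/(2p^{r-r'}) \rfloor = (p^{r'}-1)/2$, the product factors as $\prod_{r'=1}^{r} P_{r'}$ where
\begin{equation*}
P_{r'} := \prod_{\substack{1 \leq k \leq (p^{r'}-1)/2 \\ (k,p)=1}} \left(1 - \frac{p^{r'}}{k}\right).
\end{equation*}

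Each $P_{r'}$ is now a product over $p$-adic unit denominators, so the Taylor expansion $P_{r'} = \sum_j (-p^{r'})^j e_j(1/k)$ has $p$-adic integer coefficients, and a short valuation check shows $P_{r'} \equiv 1 \pmod{p^3}$ for $r' \geq 3$. For $r' = 1$ the product is precisely the one in Morley's congruence, hence $P_1 \equiv 4^{p-1} \pmod{p^3}$ by Lemma \ref{mor}. For $r' = 2$, a straight expansion gives $P_2 \equiv 1 - p^2 \widetilde{H}_{(p^2-1)/2} \pmod{p^3}$, where $\widetilde{H}_m := \sum_{(k,p)=1,\, k \leq m} 1/k$ denotes the $p$-restricted harmonic sum.

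Combining these ingredients, for $r \geq 2$ the problem reduces to verifying the matching identity $4^{p-1}(1 - p^2 \widetilde{H}_{(p^2-1)/2}) \equiv 4^{p^r-1} \pmod{p^3}$, which after dividing simplifies to $4^{p^r-p} \equiv 1 - p^2 \widetilde{H}_{(p^2-1)/2} \pmod{p^3}$. The left side I would unfold as $4^{p^r-p} = (4^{p-1})^{p + p^2 + \cdots + p^{r-1}} \equiv 1 + p^2 q_p(4) \pmod{p^3}$, by expanding $(1+pq_p(4))^N$ and observing that only the linear term survives mod $p^3$. For the right side I would group $\widetilde{H}_{(p^2-1)/2}$ by residue class mod $p$, apply $1/(a+ip) \equiv 1/a \pmod p$, and use $H_{p-1} \equiv 0 \pmod p$ to collapse the sum to $\widetilde{H}_{(p^2-1)/2} \equiv H_{(p-1)/2} \equiv -2 q_p(2) \equiv -q_p(4) \pmod p$. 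The hardest part will be this last harmonic-sum evaluation, requiring careful counting of how many $k \leq (p^2-1)/2$ lie in each residue class mod $p$ and use of the classical Glaisher identity $H_{(p-1)/2} \equiv -2 q_p(2) \pmod p$; everything else is bookkeeping of $p$-adic valuations and of the Fermat quotient $q_p(4) = 2 q_p(2) + p q_p(2)^2$.
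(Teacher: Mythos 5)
Your argument is correct, and it is worth noting that the paper offers no proof at all for this statement: it is quoted verbatim as Long's Lemma 2.4, so any comparison is really with Long's original derivation rather than with anything in this manuscript. Your route is a clean, self-contained reduction to the case $r=1$ (Morley's congruence, which the paper does record as Lemma \ref{mor}). The key steps all check out: the factorization $\prod_{j=1}^{(p^r-1)/2}\bigl(1-\tfrac{p^r}{j}\bigr)=\prod_{r'=1}^{r}P_{r'}$ via $j=p^{r-r'}k$ is legitimate because $\lfloor (p^r-1)/(2p^{r-r'})\rfloor=(p^{r'}-1)/2$ for odd $p$; the factors with $r'\geq 3$ are $1+O(p^3)$ with unit denominators; $P_1$ is exactly the Morley product; and the $r'=2$ contribution reduces to showing $\widetilde{H}_{(p^2-1)/2}\equiv H_{(p-1)/2}\equiv -2q_p(2)\equiv -q_p(4)\pmod p$, which follows from the block decomposition $k=ip+a$ (the $(p-1)/2$ full blocks each contribute $H_{p-1}\equiv 0$, the partial block contributes $H_{(p-1)/2}$) together with Glaisher's congruence. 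The matching $4^{p^r-p}\equiv 1+p^2q_p(4)\pmod{p^3}$ is also right, since the exponent $p+p^2+\cdots+p^{r-1}$ is $\equiv p\pmod{p^2}$. What your approach buys is transparency: it exhibits the $p^r$ case as Morley's congruence plus a single harmonic-sum evaluation, with no appeal to hypergeometric machinery, at the cost of the bookkeeping you describe. If you write it up, the only places demanding care are the floor computation justifying the range of $k$ in each $P_{r'}$ and the count of complete residue blocks in $\widetilde{H}_{(p^2-1)/2}$, both of which you have identified correctly.
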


\begin{lem}\label{Gprpr+12} For any prime $p>3$ and integer $r>0$, we have
$$
G\left(p^r,\frac{p^r+1}2\right)\equiv(-1)^{(p^r-1)/2}p^r\pmod{p^{r+2}}.
$$
\end{lem}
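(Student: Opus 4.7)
The plan is to evaluate $G(p^r,(p^r+1)/2)$ by direct substitution into the formula $G(n,k)=(-1)^{n+1}n\binom{2n}{n}\binom{2n-2k}{n-k}\binom{2n-2k}{n-1}/2^{3n-2k}$. With $n=p^r$ and $k=(p^r+1)/2$ the parameters collapse pleasantly: $2n-2k=p^r-1$, $n-k=(p^r-1)/2$, $n-1=p^r-1$, $3n-2k=2p^r-1$, and $(-1)^{p^r+1}=1$ since $p$ is odd. The third binomial becomes $\binom{p^r-1}{p^r-1}=1$, so the whole quantity simplifies to
$$G\!\left(p^r,\tfrac{p^r+1}{2}\right)=\frac{p^r\binom{2p^r}{p^r}\binom{p^r-1}{(p^r-1)/2}}{2^{2p^r-1}}.$$

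The strategy is then to estimate each of the two binomial factors modulo $p^3$ and combine. Lemma \ref{smor} gives $\binom{p^r-1}{(p^r-1)/2}\equiv(-1)^{(p^r-1)/2}4^{p^r-1}=(-1)^{(p^r-1)/2}2^{2p^r-2}\pmod{p^3}$. For the central binomial $\binom{2p^r}{p^r}$ I would invoke the Jacobsthal--Kazandzidis congruence: for any prime $p\geq 5$ and $r\geq 1$, $\binom{2p^r}{p^r}\equiv\binom{2p^{r-1}}{p^{r-1}}\pmod{p^{3r}}$; iterating down to the base case $\binom{2p}{p}=2\binom{2p-1}{p-1}\equiv 2\pmod{p^3}$ (which is essentially \eqref{2p1p}) yields $\binom{2p^r}{p^r}\equiv 2\pmod{p^3}$. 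Substituting both congruences and noting that $2^{2p^r-1}$ is a unit modulo $p$ gives
$$\frac{\binom{2p^r}{p^r}\binom{p^r-1}{(p^r-1)/2}}{2^{2p^r-1}}\equiv\frac{2\cdot(-1)^{(p^r-1)/2}2^{2p^r-2}}{2^{2p^r-1}}=(-1)^{(p^r-1)/2}\pmod{p^3},$$
so multiplying by $p^r$ produces the claimed congruence (and in fact yields the slightly stronger statement $\pmod{p^{r+3}}$).

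The only real obstacle is justifying $\binom{2p^r}{p^r}\equiv 2\pmod{p^3}$ uniformly in $r$. For $r=1$ this is just Wolstenholme. For $r>1$ one needs either to cite Jacobsthal--Kazandzidis or to prove the one-step lifting $\binom{2p^r}{p^r}\equiv\binom{2p^{r-1}}{p^{r-1}}\pmod{p^3}$ separately; everything else is a mechanical substitution and a pair of cancellations between $4^{p^r-1}$ and $2^{2p^r-1}$.
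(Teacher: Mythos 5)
Your derivation is correct and follows the same skeleton as the paper: substitute $n=p^r$, $k=(p^r+1)/2$ to get $G(p^r,\frac{p^r+1}2)=p^r\binom{2p^r}{p^r}\binom{p^r-1}{(p^r-1)/2}/2^{2p^r-1}$, apply Lemma \ref{smor}, and cancel the powers of $2$. The one place you diverge is the sub-step you yourself flag as the only real obstacle, the congruence for $\binom{2p^r}{p^r}$. You appeal to the Jacobsthal--Kazandzidis congruence to get $\binom{2p^r}{p^r}\equiv 2\pmod{p^3}$, which is valid but imports a comparatively deep external result (and buys you a conclusion modulo $p^{r+3}$, stronger than needed). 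The paper instead handles this elementarily: writing
\begin{align*}
\binom{2p^r}{p^r}=2\prod_{k=1}^{p^r-1}\left(1-\frac{2p^r}k\right)\equiv 2-4p^rH_{p^r-1}\equiv 2\pmod{p^2},
\end{align*}
using only Wolstenholme's congruence \eqref{hp-1}; since the prefactor $p^r$ is already present, knowing the remaining quotient modulo $p^2$ suffices for the claimed modulus $p^{r+2}$. So your argument is sound, but if you want the proof self-contained you should either prove the one-step lifting $\binom{2p^r}{p^r}\equiv\binom{2p^{r-1}}{p^{r-1}}\pmod{p^3}$ or replace the citation by the product expansion above, which is all the lemma actually requires.
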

\begin{proof} By the definition of $G(n,k)$, we have
$$G\left(p^r,\frac{p^r+1}2\right)=\frac{p^r\binom{2p^r}{p^r}\binom{p^r-1}{(p^r-1)/2}}{2^{2p^r-1}}.$$
It is easy to see that
\begin{align}\label{mao}
\binom{2p^r}{p^r}=2\prod_{k=1}^{p^r-1}\left(1-\frac{2p^r}k\right)\equiv2-4p^rH_{p^r-1}\equiv2-4pH_{p-1}\equiv2\pmod {p^2}
\end{align}
with Wolstenholme's result $H_{p-1}\equiv0\pmod{p^2}$ as mentioned in the introduction.

So by Lemma \ref{smor} and (\ref{mao}), we have
$$
G\left(p^r,\frac{p^r+1}2\right)\equiv(-1)^{(p^r-1)/2}p^r\pmod{p^{r+2}}.
$$
Now the proof of Lemma \ref{Gprpr+12} is complete.
\end{proof}
\begin{lem}\label{Gprk} For any prime $p>3$ and integer $r>0$, we have
$$
\sum_{k=1}^{(p^r-1)/2}G\left(p^r,k\right)\equiv0\pmod{p^{r+2}}.
$$
\end{lem}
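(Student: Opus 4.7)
The plan is to establish the stronger, termwise statement that for every $k$ with $1\le k\le (p^r-1)/2$ the individual summand $G(p^r,k)$ is already divisible by $p^{r+2}$; Lemma \ref{Gprk} then follows by summing. Since $p$ is odd, the factor $2^{3p^r-2k}$ in the denominator is a $p$-adic unit, so I need only bound the $p$-adic valuation $v_p$ of
$$p^r\,\binom{2p^r}{p^r}\,\binom{2p^r-2k}{p^r-k}\,\binom{2p^r-2k}{p^r-1}$$
from below by $r+2$. The prefactor $p^r$ contributes $r$, and (\ref{mao}) gives $\binom{2p^r}{p^r}\equiv 2\pmod{p^2}$, so that binomial contributes nothing.

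For $\binom{2p^r-2k}{p^r-1}$ I would apply Kummer's theorem to the addition $(p^r-1)+(p^r-2k+1)=2p^r-2k$ in base $p$. All digits of $p^r-1$ equal $p-1$, so a carry is produced at every position at or above the lowest nonzero digit of $p^r-(2k-1)$. That digit sits at position $s:=v_p(2k-1)$, and once the borrow has propagated every higher digit is also nonzero, so the total carry count is exactly $r-s$. With $s\in\{0,1,\ldots,r-1\}$ (forced by $1\le 2k-1\le p^r-2$), this yields
$$v_p(G(p^r,k))\;\ge\;2r-s+v_p\!\left(\binom{2p^r-2k}{p^r-k}\right),$$
which already exceeds $r+2$ as soon as $s\le r-2$.

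The only remaining case is $s=r-1$, where $2k-1=jp^{r-1}$ for some odd $j\in\{1,3,\ldots,p-2\}$ and a single extra carry must be extracted from the central binomial $\binom{2p^r-2k}{p^r-k}$. Writing
$$p^r-k=\frac{(2p-j)p^{r-1}-1}{2}$$
and computing $\lfloor(p^r-k)/p^{r-1}\rfloor=p-(j+1)/2$ shows that the base-$p$ digit of $p^r-k$ at position $r-1$ equals $p-(j+1)/2$, which lies in $\{(p+1)/2,\ldots,p-1\}$ for odd $j\in\{1,\ldots,p-2\}$. Doubling this digit already exceeds $p$, so Kummer's theorem forces a carry at position $r-1$ of the addition $(p^r-k)+(p^r-k)$ regardless of any carries propagating from below. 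Hence $v_p\!\left(\binom{2p^r-2k}{p^r-k}\right)\ge 1$ and the bound $v_p(G(p^r,k))\ge r+2$ is achieved.

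The main obstacle is precisely this extremal case $s=r-1$: the generic Kummer count alone only produces $v_p(G(p^r,k))\ge r+1$, and one must exploit the explicit form of $k$ to read off the top base-$p$ digit of $p^r-k$ and squeeze the missing factor of $p$ out of the central binomial. Every other step of the argument is routine carry counting.
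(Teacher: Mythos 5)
Your argument is correct, and like the paper's it works termwise, proving $p^{r+2}\mid G(p^r,k)$ for each individual $k$; but the mechanism is genuinely different. The paper never touches Kummer's theorem: it rewrites $\binom{2p^r-2k}{p^r-1}=\binom{-p^r}{p^r+1-2k}=\frac{-p^r}{p^r+1-2k}\binom{-p^r-1}{p^r-2k}$, so that an explicit factor $\frac{p^r}{p^r+1-2k}$ of valuation at least $1$ (the same quantity as your $r-s\ge 1$, since $v_p(p^r+1-2k)=v_p(2k-1)=s\le r-1$) sits next to the unit $\binom{-p^r-1}{p^r-2k}\equiv1\pmod p$, and the one remaining factor of $p$ is supplied uniformly for all $k$ by the Pan--Sun congruence \eqref{2kk}, i.e. $\binom{2p^r-2k}{p^r-k}\equiv0\pmod p$ throughout the range. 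You instead compute $v_p\bigl(\binom{2p^r-2k}{p^r-1}\bigr)=r-s$ exactly by carry counting, which makes the central binomial irrelevant except when $s=r-1$, and in that extremal case you reprove the needed instance of \eqref{2kk} from scratch by reading off the top base-$p$ digit $p-(j+1)/2\ge(p+1)/2$ of $p^r-k$. Your route is more self-contained (no appeal to \cite{PS}) and yields sharper valuations for most $k$, at the price of a case split and the digit computation; the paper's route is shorter because the single uniform fact \eqref{2kk} (which holds for all $m\ge(p^r+1)/2$ because some base-$p$ digit of $m$ must then exceed $(p-1)/2$) covers every $k$ at once. The only blemishes in your write-up are cosmetic: the propagating carry is called a ``borrow,'' and it would be worth stating explicitly that $s\le r-1$ is what already guarantees $2r-s\ge r+1$ before the extremal-case repair.
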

\begin{proof} By the definition of $G(n,k)$, we have
\begin{align*}
\sum_{k=1}^{(p^r-1)/2}G\left(p^r,k\right)&=\frac{p^r\binom{2p^r}{p^r}}{2^{3p^r}}\sum_{k=1}^{(p^r-1)/2}\binom{2p^r-2k}{p^r-k}\binom{2p^r-2k}{p^r-1}4^k\\
&=\frac{p^r\binom{2p^r}{p^r}}{2^{3p^r}}\sum_{k=1}^{(p^r-1)/2}\binom{2p^r-2k}{p^r-k}\binom{-p^r}{p^r+1-2k}4^k\\
&=-\frac{p^{2r}\binom{2p^r}{p^r}}{2^{3p^r}}\sum_{k=1}^{(p^r-1)/2}\binom{2p^r-2k}{p^r-k}\binom{-p^r-1}{p^r-2k}\frac{4^k}{p^r+1-2k},
\end{align*}
where we used $\binom{n}{k}=\binom{n}{n-k}$ and $\binom{n}{k}=(-1)^k\binom{-n+k-1}k$.

In view of the paper \cite{PS}, let $k$ and $l$ be positive integers with $k+l=p^r$ and $0<l<p^r/2$, we have
$$
l\binom{2l}l\binom{2k}k\equiv-2p^r\pmod{p^{r+1}},\ \ \frac{-2p^r}{l\binom{2l}l}\equiv\binom{2k}k\pmod{p^{2}}
$$
and
\begin{align}\label{2kk}
\binom{2p^r-2l}{p^r-l}\equiv0\pmod{p}.
\end{align}
Thus,
$$
\sum_{k=1}^{(p^r-1)/2}G\left(p^r,k\right)\equiv0\pmod{p^{r+2}},
$$
since
$$
\binom{-p^r-1}{p^r-2k}=-\prod_{j=1}^{p^r-2k}\left(1+\frac{p^r}j\right)\equiv1\pmod p
$$
and
$$
\binom{2p^r-2k}{p^r-k}\equiv0\pmod p,\ \ \ \ \frac{p^r}{p^r+1-2k}\equiv0\pmod p
$$
for each $1\leq k\leq(p^r-1)/2$.

Therefore the proof of Lemma \ref{Gprk} is completed.
\end{proof}
\noindent{\it Proof of Theorem \ref{Th10ngz}}. By the definition of $G(n,k)$, we have
$$
\sum_{k=1}^{p^r-1}G\left(p^r,k\right)=\sum_{k=1}^{(p^r+1)/2}G\left(p^r,k\right).
$$
So combining (\ref{wz2}), Lemmas \ref{Gprpr+12} and \ref{Gprk}, we immediately obtain that for any prime $p>3$ and integer $r>0$,
$$
\sum_{n=0}^{p^r-1}\frac{3n+1}{(-8)^n}\binom{2n}n^3\equiv(-1)^{(p^r-1)/2}p^r\pmod{p^{r+2}}.
$$
Therefore the proof of Theorem \ref{Th10ngz} is complete.\qed

\vskip 3mm \noindent{\bf Acknowledgments.}
The author is funded by the Startup Foundation for Introducing Talent of Nanjing University of Information Science and Technology (2019r062).

\end{document}